\documentclass[11pt]{article}

\usepackage[a4paper,margin=30mm]{geometry}
\usepackage{amsmath,amssymb,amsthm,mathtools}
\usepackage{thmtools}
\usepackage[T1]{fontenc}
\usepackage{lmodern}
\usepackage{microtype}
\usepackage[ruled,vlined,linesnumbered]{algorithm2e}
\usepackage{aliascnt}
\usepackage[hidelinks]{hyperref}
\usepackage{orcidlink}
\usepackage{enumitem}
\usepackage[small]{complexity}
\usepackage[capitalise,noabbrev]{cleveref}
\newtheorem{theorem}{Theorem}[section]
\newtheorem{lemma}[theorem]{Lemma}
\newtheorem{proposition}[theorem]{Proposition}

\theoremstyle{definition}
\newtheorem{definition}[theorem]{Definition}

\numberwithin{equation}{section}

\newcommand{\Alt}{\operatorname{Alt}}
\newcommand{\Sym}{\operatorname{Sym}}
\newcommand{\id}{\mathrm{id}}
\newcommand{\Stab}{\operatorname{Stab}}

\newcommand{\diam}{\operatorname{diam}}

\newcommand{\N}{\mathbb N}
\newcommand{\Z}{\mathbb Z}
\newcommand{\one}{\mathbf{1}}
\newcommand{\BLength}{\textnormal{\textsc{Binary Length}}}
\newcommand{\BDiameter}{\textnormal{\textsc{Binary Diameter}}}
\newcommand{\ULength}{\textnormal{\textsc{Unary Length}}}
\newcommand{\UDiameter}{\textnormal{\textsc{Unary Diameter}}}

\DeclarePairedDelimiter\abs{\lvert}{\rvert}
\DeclarePairedDelimiter\Abs{\lVert}{\rVert}

\providecommand\given{}
\makeatletter
\newcommand\mathset@symbol[1][]{%
  \nonscript\:#1\vert\allowbreak\nonscript\:\mathopen{}%
}
\DeclarePairedDelimiterX\Set[1]{\{}{\}}{%
  \renewcommand\given{\mathset@symbol[\delimsize]}%
  #1%
}

\DeclarePairedDelimiterX\Gen[1]{\langle}{\rangle}{%
  \renewcommand\given{\mathset@symbol[\delimsize]}%
  #1%
}

\makeatother

\hypersetup{
 pdftitle={Word Length and Diameter in Permutation Groups},
 pdfkeywords={permutation groups, diameter, word length, complexity}
}

\title{Word Length and Diameter in Permutation Groups}
\author{Markus Lohrey\,\orcidlink{0000-0002-4680-7198} and Alexander Thumm\,\orcidlink{0009-0005-4240-2045}}
\date{}

\begin{document}
\maketitle

\begin{abstract}
The input for the binary diameter problem consists of explicitly represented
permutations generating a finite group $G$ and a binary-encoded nonnegative
integer $k$. The question is whether every element of $G$
is a product of at most $k$ input generators. For the binary length problem, 
the input contains in addition a permutation $g \in G$ and it is asked whether $g$ is a product of at most $k$ input generators.
We prove that the binary diameter
problem is $\PSPACE$-complete.
When restricted to $2$-step nilpotent groups, the binary diameter problem is 
shown to be complete for $\mathsf{\Pi_2^P}$, whereas the  
binary length problem is shown to be \NP-complete. 
Without the restriction to $2$-step nilpotent groups, the binary length problem is
\PSPACE-complete by a result of Jerrum.
\end{abstract}

\section{Introduction}
\label{sec:introduction}

Let $G$ be a finite group. For a subset $\Sigma \subseteq G$ we denote with $\langle \Sigma \rangle$ the
subgroup of $G$ generated by the elements from $\Sigma$, that is, the closure of $\Sigma$ under the group multiplication.\footnote{Since $G$ is 
finite, the closure of $\Sigma$ under group multiplication contains the group identity and is also closed under taking inverses.}
If $\langle \Sigma \rangle = G$ then $\Sigma$ is called a \emph{generating set} of $G$. Here, $\Sigma$ may contain the identity element.

In this setting, one can define the length of a given element $g \in G$ with respect to $\Sigma$
and the diameter of $G$ with respect to $\Sigma$ as follows:
\begin{align}
 \ell(g,\Sigma) &\ = \  \min\{\ell\in\N: \exists a_1, \ldots, a_\ell \in \Sigma : g=a_1\cdots a_\ell\}, \label{eq:length} \\
 \qquad
 \diam(G,\Sigma) &\ = \ \max \{ \ell(g,\Sigma) : g\in G \} \label{eq:diameter}.
\end{align}
These notions have been intensively studied from a combinatorial as well as computational viewpoint. 
Upper and lower bounds on the diameter in various finite groups have been studied for instance in \cite{BabaiBS04,BabaiH05,BabaiH92, BabaiHKLS90,BabaiKL89,BabaiS88,BabaiS92,HeSe14,KMS84,McKe84}. Let us mention in this context a famous (and still open) conjecture
of Babai and Seress \cite{BabaiS88} stating that for every finite non-abelian simple group $G$ and every generating set $\Sigma$,
$\diam(G,\Sigma)$ is bounded by $\mathcal{O}((\log |G|)^c)$ for some universal constant $c$.
This would imply in particular that the diameters of the alternating group $\Alt(n)$ and the symmetric group $\Sym(n)$ are bounded by a polynomial in $n$ with respect to each of their generating sets.
The best currently known upper bound for these groups is $\exp(\mathcal{O}(\log^4 n \log \log n))$ \cite{HeSe14}.

Many problems about mechanical puzzles reduce to questions about the diameter of finite groups. 
As an example let us mention Rubik's cube. For a long time it was open how many moves in Rubik's cube are needed to transform an arbitrary initial configuration into the target configuration. This number, often referred to as God's number, is simply the diameter of the so-called Rubik's cube group. The precise value of this diameter 
was open for a long time. In 2013, Rokicki et al.~\cite{Roki13} proved that it is equal to 20.

In this paper, we are interested in computational aspects of length and diameter in permutation groups, that is, groups that are given by
a set of generating permutations from $\Sym(d)$ (the group of all permutations on $d$ elements).
Note that such a group may be of size exponential in the length of the input. Algorithmic problems in permutation groups
have a long history (see e.g. \cite{Seress2003} for a detailed investigation) and are tightly linked to other areas of computer science
like the graph isomorphism problem \cite{Babai16} and coding theory \cite{Huffman1998CodesAndGroups}.

We are interested in the following problems for permutation groups:
\begin{definition}\label{def:problems}
The problem $\BLength$ is defined as follows:
\begin{quote}
\textbf{Input:} a unary-encoded number $d \in \N$, a set of permutations $\Sigma\subseteq\Sym(d)$, a permutation
$g\in\langle \Sigma\rangle$, and a binary-encoded integer $k\in\N$.  \\[1.5mm]
\textbf{Question:} Does $\ell(g,\Sigma)\leq k$ hold?
\end{quote}
The problem $\BDiameter$ is defined as follows:
\begin{quote}
\textbf{Input:} a unary-encoded number $d \in \N$, a set of permutations $\Sigma\subseteq\Sym(d)$, and a binary-encoded integer $k\in\N$.  \\[1.5mm]
\textbf{Question:} Does $\diam(\langle \Sigma\rangle,\Sigma)\leq k$ hold?
\end{quote}
\end{definition}
In the definition of $\BLength$, the property $g\in\langle \Sigma\rangle$ can be tested in polynomial time \cite{FHL1980}.

Goldreich and Even  \cite{EvenG81} studied these problems, but they considered the variants where the integer $k$ is given in
unary notation; let us call these variants $\ULength$ and $\UDiameter$.
Goldreich and Even showed that $\ULength$ is $\mathsf{NP}$-complete and 
$\UDiameter$ is $\mathsf{NP}$-hard. The precise complexity of $\UDiameter$ was settled in \cite{LohreyRosowski2023},
where the problem was shown to be $\Pi_2^{\mathsf P}$-complete.

For the arguably more natural binary encoding of $k$, Jerrum proved that $\BLength$ is $\PSPACE$-complete, even for two input
generators~\cite{Jerrum1985}. Concerning $\BDiameter$, it is easy to see that the problem belongs to $\PSPACE$
and $\PSPACE$-completeness was conjectured in \cite{LohreyRosowski2023}. The restriction of 
$\BDiameter$ to commutative permutation groups was shown to be 
$\Pi_2^{\mathsf P}$-complete in \cite{LohreyRosowski2023}. Here, we confirm the conjecture 
from \cite{LohreyRosowski2023}:

\begin{theorem}\label{thm:main}
The problem $\BDiameter$ is $\PSPACE$-complete under polynomial-time
many-one reductions.
\end{theorem}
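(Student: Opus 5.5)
Membership in $\PSPACE$ is routine. $\diam(\langle\Sigma\rangle,\Sigma)\le k$ holds iff for every $g\in\langle\Sigma\rangle$ we have $\ell(g,\Sigma)\le k$. We can enumerate all $g\in\Sym(d)$ one after another in polynomial space, test $g\in\langle\Sigma\rangle$ in polynomial time \cite{FHL1980}, and decide $\ell(g,\Sigma)\le k$ with the $\PSPACE$ algorithm for $\BLength$ (Jerrum \cite{Jerrum1985}; alternatively, use Savitch-style reachability on the Cayley graph, where paths have length at most $|G|\le d!$, so all counters have polynomially many bits). The substance of the proof is hardness.

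For hardness I would reduce from $\BLength$, which is $\PSPACE$-complete by \cite{Jerrum1985}. Given $(d,\Sigma,g,k)$, the goal is to build $(d',\Sigma',k')$ such that $\diam(\langle\Sigma'\rangle,\Sigma')\le k'$ iff $\ell(g,\Sigma)\le k$. The main difficulty is that diameter is a $\forall$-statement over all group elements, while length concerns a single element. Two things are therefore needed. First, all elements other than the designated one must be cheap. Second, the designated one must be the unique expensive element, with cost governed by $\ell(g,\Sigma)$.

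My plan is a direct-product construction with a padding component. Take $G'=\langle\Sigma\rangle\times C$, where $C$ is an auxiliary group, for instance a cyclic group $\Z_m$ or a product of cyclic groups of coprime orders realised on disjoint points. Use generators of the form $(a,c_a)$ for $a\in\Sigma$, together with extra generators $(h,c)$ for certain elements $h$. These extra generators let every element be reached within a controlled budget except along the coset direction tied to $g$. Concretely, I would try to attach a long cycle $\Z_m$, with $m$ of order $k$ and written in binary so that its size stays polynomial via a product of primes. Add "shortcut" generators that make every element $(x,t)$ reachable in about $D_0+t$ steps, where $D_0$ is a polynomially computable upper bound on distances within $\langle\Sigma\rangle$ that does not depend on $g$. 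Arrange the shortcuts so that the one element $(g,t^*)$ sitting at maximal $C$-distance can be reached cheaply only through a word for $g$ using the original generators; each such generator advances the $C$-component by $1$. Reaching $(g,t^*)$ within $k'$ then forces a $\Sigma$-word for $g$ of length at most $k$.

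The main obstacle is this step: controlling the diameter of all other elements. It requires an explicit upper bound on lengths in $\langle\Sigma\rangle$, and I do not have a polynomially computable bound tight enough to be useful. To get around this, I would first replace Jerrum's instance by a structured hard instance whose group is well understood. Jerrum's proof simulates a polynomial-space machine, so the group can be taken to be, for example, a product of groups in which the identity-to-$g$ distance encodes acceptance. I would add generators that let one walk cheaply to any element that is not on the "computation path", for example by adding all transpositions inside each block (padding with symmetric groups of known small diameter). What remains is to verify that these shortcuts do not shorten the route to the target element. Proving this carefully, via an invariant such as a counter component in $\Z_m$ that only the original generators advance, is where I expect most of the work to lie.
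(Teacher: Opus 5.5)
Your membership argument is fine. The hardness part, however, has a genuine gap. You correctly identify what is needed: every non-target element must be cheap, while the target's cost must still be governed by $\ell(g,\Sigma)$. But you leave the construction achieving this open, and the sketch you give does not work as stated. If the shortcuts make every $(x,t)\in\langle\Sigma\rangle\times C$ reachable in about $D_0+t$ steps, this applies to $x=g$ as well. So $(g,t^*)$ costs about $D_0+t^*$ regardless of $\ell(g,\Sigma)$. A counter advanced only by the original generators does not change this, because shortcuts that do not advance it can be interleaved freely. The ``coset direction tied to $g$'' that should be excluded is never specified.

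Note that the bound $D_0$ is not the real difficulty: by \Cref{lem:fast}, $T(G)$ is a polynomial-size set in which every element of $G$ has length at most $d$. The real difficulty is preventing such shortcuts from helping with the target. Your fallback (re-engineering Jerrum's reduction and padding with symmetric groups) leaves exactly this point unproved. The paper needs no structured instance and reduces from an arbitrary $\BLength$ instance.

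The paper closes the gap with four ingredients your plan lacks.

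First, it builds a central target with an amplified length gap. Take $H=G_0\times\langle\alpha\rangle$ with $\alpha$ of order $q^2$, generators $\Gamma=\{(a,\alpha):a\in\Sigma\cup\{\id\}\}\cup\{(g_0^{-1},\alpha^q)\}$, and $h=(\id,\alpha^{q+k})$. Then $\ell(h,\Gamma)\le k+1$ in the positive case and $\ell(h,\Gamma)>m$ in the negative case. Here $m\ge k+100(e+2)^4$ absorbs all polynomial overhead.

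Second, it covers the complement of the target. Since $H$ acts faithfully, $\bigcap_i hK_i=\{h\}$ for the point stabilizers $K_i$. Hence $H\setminus\{h\}$ is the union of the sets $H\setminus hK_i=D_sJ_s$, each element of which is one coset representative times a short $T(J_s)$-word.

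Third, it isolates the hard coordinate in $W=H\wr\Z_r$. Fillers have shift $0$ and may use $T(H)$ or $T(J_s)$ at most positions, but they act trivially at position $0$ and only through $\Gamma$ at position $\tau$. Entries, exits and bridges carry nonzero shifts and restricted values at position $0$.

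Fourth, it adds a clock $\beta$ of order $q=2m-1$ with graded variants: fillers up to $\beta^2$, entries and exits up to $\beta$, bridges only with $\id$. In the positive case every element of $W\times\langle\beta\rangle$ then has a word of length exactly $m$. In the negative case, any word of length at most $m$ for the central element $\Lambda=((\mathbf h,0),\beta^{q-1})$ must consist of fillers plus exactly one entry and its matching exit. After a cyclic rotation, which is allowed by centrality, the value at position $0$ either lies in $D_sJ_s$, which does not contain $h$, or is produced by fewer than $m$ letters of $\Gamma$ at position $\tau$. Both are impossible.

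Without devices like the third and fourth, which limit how many letters can affect the target coordinate at all, a shortcut-based construction cannot separate the target from the other elements.
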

Membership in $\PSPACE$ is an easy observation.
The proof of $\PSPACE$-hardness uses a reduction from $\BLength$ and is the main contribution of the paper.

In the second part of the paper, we restrict to 2-step nilpotent groups. 
Recall that a group $G$ is 2-step nilpotent if
every commutator in $G$ commutes with all elements of $G$.
We prove the following two results:

\begin{theorem}  \label{thm-nil-length}
  The problem $\BLength$ restricted to $2$-step nilpotent groups is \NP-complete.
\end{theorem}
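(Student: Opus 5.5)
Membership in \NP\ and \NP-hardness are proved separately.

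\emph{Membership in \NP.} Let $G=\langle\Sigma\rangle$ be 2-step nilpotent with $\Sigma=\{a_1,\dots,a_m\}$. Since commutators are central, every product of generators can be reordered as
\[
a_1^{e_1}a_2^{e_2}\cdots a_m^{e_m}\cdot z,
\]
where $e_i$ counts how often $a_i$ occurs and $z$ is a product of commutators $[a_j,a_i]$. Moving one occurrence of $a_i$ past one occurrence of $a_j$ contributes exactly one factor $[a_j,a_i]^{\pm1}$, and this factor is central. Hence, for a word $w$ with letter counts $e_i$, we get
\[
w = a_1^{e_1}\cdots a_m^{e_m}\prod_{i<j}[a_j,a_i]^{c_{ij}},
\]
where $c_{ij}$ is the number of pairs (occurrence of $a_j$ before an occurrence of $a_i$). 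By a bubble-sort argument, any tuple $(c_{ij})$ with $0\le c_{ij}\le e_ie_j$ is realizable by some arrangement? This is false in general, because the inversion counts are not independent.

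The certificate therefore avoids realizability entirely and guesses the word in compressed form. Element orders in $\Sym(d)$ are at most $e^{O(\sqrt{d\log d})}$, and the exponent of $G$ divides the lcm of the orders, which has polynomially many bits. We guess a word as a sequence of at most polynomially many blocks $a_{i}^{f}$, with binary exponents $f$. This is valid if a shortest word can always be rearranged into polynomially many blocks. I expect this compression lemma to be the main obstacle.

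To prove it, I would work in the free 2-step nilpotent quotient. The word's value depends only on $(e_i)$ and the inversion matrix $(c_{ij})$. For a shortest word, I would argue that some word with the same $e_i$ and the same $c_{ij}$ modulo the orders of the central commutators has few blocks. The pairwise inversion counts of a word can be matched by a word in which each generator appears in $O(m)$ runs: for each pair only one number matters, and a greedy interleaving achieves any target reachable vector using $O(m^2)$ blocks.

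Verification is then polynomial. Evaluate each block by repeated squaring, multiply the blocks together, compare the result with $g$, and check that $\sum f\le k$.

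\emph{\NP-hardness.} Here I would use the commutative case, since abelian groups are 2-step nilpotent. Reduce from \textsc{Subset Sum} or exact cover, as in \cite{EvenG81}, using abelian groups given as products of cyclic permutation groups. By the Chinese remainder theorem, cyclic groups of distinct small prime orders give binary-size order with a polynomial degree. Concretely, with generators $a_i$ in $\prod_p \Z_p$ one encodes the constraint that some $x_i\in\{0,1\}$ satisfy a linear condition with total length $\le k$. An alternative is to cite the known \NP-hardness of $\ULength$ for abelian groups, since unary $k$ is a special case of binary $k$. I believe Goldreich--Even's reduction already uses abelian groups. If it does not, I would give a direct reduction from 3-dimensional matching, with one cyclic coordinate per element and lengths forcing exact cover.
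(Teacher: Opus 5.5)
Your \NP-hardness part is fine and matches the paper, which simply cites hardness for abelian permutation groups (a unary bound is trivially rewritten in binary). The gap is in the membership proof. Your certificate is a run-length encoded word verified by repeated squaring. It is only complete if every $g$ with $\ell(g,\Sigma)\le k$ has a word of length at most $k$ with polynomially many runs. This compression lemma is the entire nontrivial content of the upper bound, and you assert it rather than prove it.

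The sketch ``for each pair only one number matters, and a greedy interleaving achieves any reachable target with $O(m^2)$ blocks'' is not an argument. As you note yourself, the counts $c_{ij}$ are coupled. Moving occurrences to correct one pair count changes the counts of every other pair involving the moved letters, and nothing in the sketch controls all $\binom{m}{2}$ counts exactly and simultaneously. A length-independent bound is also much more than you need, and it is not evident: if you keep the other letters fixed and move only the $a$'s, you can be forced to use logarithmically many $a$-runs. A bound polynomial in $|\Sigma|$ and $\log(k+1)$ suffices.

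The missing idea is the paper's \Cref{lem:profile-compression}. Fix a letter $a$ and write $w=w_0a^{\lambda_1}w_1\cdots a^{\lambda_n}w_n$ with $a$-free $w_i$. Redistribute the $a$'s while keeping the $w_i$. Three facts make this tractable:
\begin{enumerate}
\item All profile entries over $\Sigma\setminus\{a\}$ are unchanged.
\item $\nu(aa)$ and $\nu(ba)$ are determined by $\nu(a)$, $\nu(b)$ and $\nu(ab)$.
\item The remaining vector $(\nu(a),(\nu(ab))_{b\neq a})$ equals $\sum_i\lambda_ix_i$ with $x_i=\bigl(1,(\tbinom{w_i\cdots w_n}{b})_{b\neq a}\bigr)$, so it is linear in the $\lambda_i$.
\end{enumerate}
The integer Carath\'eodory bound of Eisenbrand and Shmonin then gives new exponents with at most $\log(\nu(a)+1)+\sum_{b\neq a}\log(\nu(ab)+1)$ nonzero entries and the same 2-profile. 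This never increases the number of runs of other letters. Iterating over all letters therefore gives a word with the same 2-profile, hence the same length and (by \Cref{lem:profile-evaluation}) the same value, with at most $\Abs{\psi_2(w)}=\mathcal{O}(|\Sigma|^2\log(k+1))$ runs.

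With this lemma your guess-and-evaluate certificate works. The paper instead guesses the profile, checks its value via \Cref{lem:profile-evaluation}, and certifies realizability with the same kind of compressed word; the two procedures are equivalent.
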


\begin{theorem}  \label{thm-nil-diameter}
The problem $\BDiameter$ restricted to $2$-step nilpotent groups is complete for $\mathsf{\Pi_2^P}$ (the second
universal level of the polynomial time hierarchy).
\end{theorem}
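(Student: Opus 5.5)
Membership in $\Pi_2^{\mathsf P}$ comes first. Fix a 2-step nilpotent $G=\langle\Sigma\rangle\le\Sym(d)$ with $\Sigma=\{a_1,\dots,a_m\}$. Because $[G,G]\le Z(G)$, every product of generators can be rewritten as $a_1^{e_1}\cdots a_m^{e_m}\cdot c$, where $e_i$ counts the occurrences of $a_i$ and $c\in[G,G]$ is a product of commutators $[a_i,a_j]^{n_{ij}}$. For $i>j$ the exponent $n_{ij}$ counts the pairs in which an occurrence of $a_i$ precedes an occurrence of $a_j$. A word with letter counts $(e_i)$ and length $\sum e_i$ therefore realizes $a_1^{e_1}\cdots a_m^{e_m}\prod_{i>j}[a_i,a_j]^{n_{ij}}$. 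Here the $n_{ij}$ are the inversion counts of some arrangement of the multiset, so $0\le n_{ij}\le e_ie_j$.

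The orders of the $a_i$ and of the $[a_i,a_j]$ are at most $\exp(\mathrm{poly}(d))$, so all relevant exponents have polynomially many bits. Given $g$, we universally check that there exist such exponents and an arrangement with at most $k$ letters equal to $g$. The difficulty is that the arrangement is a string of possibly exponential length. I would handle this with a compact description: a word over $m$ letters is determined, up to the group element it produces, by its pairwise inversion counts. These must satisfy a characterization of realizable inversion vectors. I expect such a characterization to exist: realizability of prescribed pairwise inversion numbers for a multiset permutation reduces to a polynomially checkable condition, or at least lies in $\NP$. This is exactly where \cref{thm-nil-length} enters. The $\NP$ membership proved for $\BLength$ on 2-step nilpotent groups gives a polynomial certificate for $\ell(g,\Sigma)\le k$. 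Hence the diameter problem, ``for all $g\in G$ (encoded by polynomial-size data such as a normal form $a_1^{e_1}\cdots a_m^{e_m}c$ with bounded exponents), $\ell(g,\Sigma)\le k$'', is a $\forall\exists$ statement with polynomial-time matrix. One must also check that every $g\in G$ is reachable by a universally chosen normal-form tuple, and that testing membership of $c$ in $[G,G]$ is polynomial; the latter holds by \cite{FHL1980}.

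For hardness, I would reduce from the $\Pi_2^{\mathsf P}$-complete restriction of $\BDiameter$ to commutative groups \cite{LohreyRosowski2023}. Abelian groups are 2-step nilpotent, so the identity map is already a reduction. The commutative instance can be presented as an explicit permutation group, and it stays a valid instance of the restricted problem.

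The main obstacle is making the $\NP$ certificate for length (from \cref{thm-nil-length}) rigorous. In particular one must show that the achievable tuples of commutator exponents, given letter counts $e_i$ with $\sum e_i\le k$, can be verified in polynomial time; this needs some care for $m\ge 3$ letters, where the pairwise inversion numbers are not independent. I would try to sidestep exact characterization by guessing a run-length encoded word with polynomially many blocks. The claim to prove is that any achievable collection of values $(n_{ij}\bmod \mathrm{ord}[a_i,a_j])$ is achieved by such a short block structure, via an exchange argument that merges blocks while adjusting counts modulo the commutator orders.
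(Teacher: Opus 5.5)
Your argument is the paper's: quantify universally over $g$, test $g\in\langle\Sigma\rangle$ in polynomial time, and use the \NP\ certificate from \cref{thm-nil-length} for $\ell(g,\Sigma)\le k$, with hardness inherited directly from the abelian case \cite{LohreyRosowski2023}. The paper ranges over one-line permutations $g\in\Sym(d)$, which is simpler than your normal-form tuples $a_1^{e_1}\cdots a_m^{e_m}c$ but equivalent. Your closing ``main obstacle'' is not part of this theorem: it is the content of \cref{thm-nil-length}, which the paper establishes by showing that every realizable $2$-profile has a witness with polynomially many runs. The paper does this with the Eisenbrand--Shmonin integer Carath\'eodory bound, redistributing each letter's occurrences among its runs while preserving the profile exactly, so the speculative inversion-vector characterization and the modular exchange argument are unnecessary here.
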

These results generalize corresponding results for abelian permutation groups from \cite{LohreyRosowski2023}.
Therefore we only have to prove the upper bounds, for which we show membership in \NP\ of the following problem
of independent interest: given a finite alphabet $\Sigma$ and binary encoded integers $c_a$, $c_{a,b}$ ($a,b \in \Sigma$, $a \neq b$),
does there exist a word $w$ with $c_a$ many occurrences of $a$ for all $a \in \Sigma$ and $c_{a,b}$ many scattered occurrences
of $ab$ in $w$ for all $a,b \in \Sigma$ with $a \neq b$.

\section{Preliminaries}
\label{sec:prel}

\subsection{General definitions}

The symbol $\bigsqcup$ is used for disjoint unions.
For integers $a,b$, write
$[a,b]=\{z\in\Z:a\leq z\leq b\}$; this set is empty when $a>b$.
We use $\N=\{0,1,2,\ldots\}$. 
Throughout, $\log$ denotes the logarithm to base~$2$.
For $n \in \mathbb{N}$, we write $\Abs{n} \coloneqq \log (n + 1)$, which is up to rounding the number of bits in the binary representation of $n$.
We extend this notation to vectors by defining 
\begin{equation*}
\Abs{(n_1, \dotsc, n_d)} \coloneqq \Abs{n_1} + \cdots + \Abs{n_d}.
\end{equation*}
Since $n_1 + \cdots + n_d + 1 \leq (n_1 + 1) \cdots (n_d + 1)$, 
\begin{equation}\label{eqn:log-sum}
  \Abs{n_1 + \cdots + n_d} \leq \Abs{(n_1, \dotsc, n_d)}.
\end{equation}
Let $\Sigma$ be a finite alphabet.
For a word $w \in \Sigma^\ast$, we write $\abs{w}$ for its length and $w[i]$ for its $i$-th letter, where $1 \leq i \leq \abs{w}$; the empty word is denoted by $\varepsilon$.

\subsection{Complexity classes}

We assume that the reader is also familiar with basic concepts from complexity theory. We will work with the complexity classes \NP, \PSPACE, and
$\mathsf{\Pi_2^P}$. The latter is the second universal level of the polynomial time hierarchy. 
In general, the levels $\Sigma_k^{\mathsf{P}}$ and $\Pi_k^{\mathsf{P}}$
of the {\em polynomial time hierarchy} \cite{stockmeyer} are defined as follows:
\begin{itemize}
\item $\Sigma_0^{\mathsf{P}} = \Pi_0^{\mathsf{P}} = \mathsf{P}$ 
\item $\Sigma_{k+1}^{\mathsf{P}}$ is the set of all languages $L$ such that there exists a language $K \in \Pi_k^{\mathsf{P}}$ and a polynomial $p$ 
with $L = \{ x \mid \exists y \in \{0,1\}^{p(|x|)} : x \# y \in K \}$ (here $\#$ is a separator symbol).
\item $\Pi_{k+1}^{\mathsf{P}}$ is the set of all languages $L$ such that there exists a language $K \in \Sigma_k^{\mathsf{P}}$ and a polynomial $p$ 
with $L = \{ x \mid \forall y \in \{0,1\}^{p(|x|)} : x \# y \in K \}$.
\end{itemize}
In particular, we have $\Sigma_1^{\mathsf{P}} = \mathsf{NP}$ and $\Pi_1^{\mathsf{P}} = \mathsf{coNP}$.

\subsection{Groups}

We assume that the reader is familiar with basic concepts from group theory.
In this paper, we only deal with finite groups.
For a group $G$, we write $Z(G)$ for the \emph{center} of $G$, which is the subgroup of all elements $g \in G$ with
$gh=hg$ for every $h \in G$.  Elements from $Z(G)$ are also called \emph{central} in $G$.

For a group $G$ and $g,h \in G$, we write $[g,h] = g^{-1} h^{-1} gh$ for the commutator of $g$ and $h$.
Clearly, $[g,h]=1$ if and only if $gh = hg$. For subsets $A, B \subseteq G$ we write 
 $[A,B]$ for the subgroup of $G$ generated by all commutators 
 $[g,h]$ with $g \in A$ and $h \in B$.
 
 The \emph{lower central series} of a group $G$ is the sequence of subgroups $\gamma_i(G)$ defined by
\[
  \gamma_1(G) = G
  \qquad\text{and}\qquad
  \gamma_{i+1}(G) = [\gamma_i(G),G] \quad\text{for } i \geq 1.
\]
For $k \in \mathbb{N}$, a group $G$ is called \emph{$k$-step nilpotent} if $\gamma_{k+1}(G) = \{1\}$.
In particular, a group is $2$-step nilpotent if and only if every commutator is central.

For a subgroup $H \le G$, $[G:H]$ denotes the index of $H$ in $G$, which is the number
of different right cosets $Hg$ (or equivalently, the number of different left cosets $gH$), where $g \in G$. The set of all
right (resp., left) cosets of $H$ in $G$ is denoted by $H\backslash G$ (resp., $G/H$).
A right (resp., left) transversal for $H$ in $G$ is a set that contains from each $C \in H\backslash G$ (resp., 
$C \in G/H$) exactly one element. We write $\overline{H\backslash G}$ (resp., 
$\overline{G/H}$) if we want to denote such a transversal, but the concrete choice is not important.
We have
\begin{equation*}
 G=\bigsqcup_{g\in\overline{G/H}}gH
  =\bigsqcup_{g\in\overline{H\backslash G}}Hg.
\end{equation*}
If $G$ is generated by $\Sigma$, then every word $w \in \Sigma^*$ evaluates in the natural way to a group element $g \in G$
(simply multiply the symbols in $w$ from left to right in the group $G$). We also say that $w$ represents $g$ or $g$ is the value of $w$.

With $\Sym(d)$ (for $d \ge 1$) we denote the group of all permutations on $[1,d]$; it is also
called the \emph{symmetric group of degree} $d$.
Permutations act on the right. Thus, for a point $i\in[1,d]$ and permutations $g,h\in\Sym(d)$, we write $ig$ for
the image of $i$ under $g$, and $i(gh)=(ig)h$.
The identity permutation is denoted by $\id$. An input permutation
$g\in\Sym(d)$ is represented by the list $(1g,\ldots,dg)$, which is also called the \emph{one-line 
representation} of $g$.

For $G \leq \Sym(d)$ and $i \in [1,d]$ we write $\Stab_G(i)=\{g\in G:ig=i\}$ for the \emph{stabilizer} of $i$.
It is a subgroup of $G$ and 
\begin{equation}\label{eq:transversals}
[G:\Stab_G(i)] \leq d.
\end{equation}
The following lemma follows from standard algorithms for
permutation groups that were developed by Furst, Hopcroft, and
Luks~\cite{FHL1980}; see also Section~4.1, Theorem~4.2.4, and Section~5.1.1 of Seress' book
\cite{Seress2003}.

\begin{lemma}\label{lem:fast}
Given generators of $G\leq\Sym(d)$, one can compute in polynomial time
a set $T(G)\subseteq G$ with the following properties:
\begin{enumerate}[label=\textup{(\roman*)}]
\item $\id\in T(G)$ and $|T(G)|\leq d^2$;
\item every element of $G$ is a product of at most $d$ elements
of $T(G)$.
\end{enumerate}
Given $i \in [1,d]$ and generators of $G$ one can compute in polynomial time a generating set for $H=\Stab_G(i)$ and transversals
$\overline{G/H}$ and $\overline{H\backslash G}$, each containing
$\id$ and by \eqref{eq:transversals} at most $d$ elements.
\end{lemma}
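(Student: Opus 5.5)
The plan is to derive everything from a stabilizer chain computed with the Schreier--Sims method in the polynomial-time form of Furst, Hopcroft and Luks~\cite{FHL1980}. Consider the chain of pointwise stabilizers
\[
G=G_0\geq G_1\geq\cdots\geq G_{d}=\{\id\},\qquad G_j=\Set{g\in G\given kg=k\text{ for all }k\in[1,j]}.
\]
For each $j$ we compute a generating set for $G_j$ together with a right transversal $R_j=\overline{G_j\backslash G_{j-1}}$ containing $\id$. Since $G_j=\Stab_{G_{j-1}}(j)$, the right cosets $G_jg$ in $G_{j-1}$ correspond to the points $jg$ of the orbit of $j$ under $G_{j-1}$. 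Hence $|R_j|\leq d$ by \eqref{eq:transversals}.

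We then set $T(G)=\bigcup_{j=1}^{d}R_j$. This set contains $\id$, and $|T(G)|\leq d\cdot d=d^2$, which gives (i). For (ii), let $g\in G=G_0$ and sift it down the chain. There is a unique $r_1\in R_1$ with $g\in G_1r_1$, so $gr_1^{-1}\in G_1$. Repeating this at each level gives
\[
g=r_d r_{d-1}\cdots r_1\qquad\text{with } r_j\in R_j.
\]
So $g$ is a product of at most $d$ elements of $T(G)$.

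The transversals are computed by an orbit algorithm. Given generators $S$ of a group $K$ and a point $i$, we run a breadth-first search on the orbit $iK$. Each time a new point $m$ is reached, we record an element $u_m\in K$ with $iu_m=m$, obtained as a word in $S$ multiplied out as a permutation. Two elements $g,g'\in K$ lie in the same right coset of $\Stab_K(i)$ exactly when $ig=ig'$. Therefore $\Set{u_m\given m\in iK}$, with $u_i=\id$, is a right transversal $\overline{\Stab_K(i)\backslash K}$ of size at most $d$. Its set of inverses is a left transversal $\overline{K/\Stab_K(i)}$. This also settles the last claim of the lemma for $H=\Stab_G(i)$: run the same procedure with the point order rearranged so that $i$ comes first in the chain, and the first stabilizer in the chain is then $H$.

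The main obstacle is keeping the generating sets of the stabilizers $G_j$ of polynomial size. By Schreier's lemma, the elements $u_m s u_{ms}^{-1}$ for $s\in S$ generate the stabilizer, but iterating this down the chain would blow up the number of generators. To prevent this, I would follow the FHL sifting procedure: maintain partial transversal tables, and sift each Schreier generator through the tables, inserting it wherever it first fails to sift. Each table has at most $d$ entries, so at most $d^2$ insertions can occur in total. Each insertion triggers polynomially many new sifts, so the whole computation takes polynomial time. The correctness and running-time analysis of this procedure is exactly \cite{FHL1980} (see also \cite[Section~4.1, Theorem~4.2.4, Section~5.1.1]{Seress2003}), and I would cite it rather than reprove it.
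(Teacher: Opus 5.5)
Your proposal is correct and takes the same approach as the paper. The paper gives no argument beyond citing the Furst--Hopcroft--Luks stabilizer-chain algorithm and Seress's book, and your reconstruction is the standard content behind that citation: $T(G)$ as the union of the transversals of the pointwise-stabilizer chain, sifting to get $g=r_d\cdots r_1$, the orbit algorithm for $\overline{H\backslash G}$ with inverses giving $\overline{G/H}$, and $i$ as the first base point to get generators of $\Stab_G(i)$.

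As a minor aside, for the single stabilizer $\Stab_G(i)$ Schreier's lemma alone already gives at most $d\cdot|S|$ generators, so the FHL sifting machinery is only needed to compute $T(G)$.
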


\section{Proof of \Cref{thm:main}}

It is easy to see that $\BDiameter$ belongs to $\PSPACE$.
For a given set $\Sigma \subseteq \Sym(d)$ and a binary-encoded $k \in \N$, the algorithm
runs an outer loop over all permutations $g \in \Sym(d)$. 
For each $g$, it tests whether it belongs to $\langle \Sigma \rangle$ (this can be done in polynomial time by \cite{FHL1980}).
If not, continue with the next permutation, otherwise guess a sequence 
of permutations $g_1, g_2, \ldots, g_\ell \in \Sigma$ with $\ell \leq k$.
Only the current prefix product $g' = g_1 g_2 \ldots g_i$ and the number $i$ 
(in binary encoding) are stored. If this prefix product is equal to $g$ then continue
with the next permutation in the outer loop. Otherwise, if $g$ is not reached when $i = k$
then the algorithm rejects. 

In the rest of the paper we prove $\PSPACE$-hardness of $\BDiameter$ by 
a reduction from $\BLength$. 
Fix an instance $(\Sigma,g_0,k)$ of $\BLength$, where
$\Sigma\subseteq\Sym(d)$ and $g_0\in G_0=\langle \Sigma\rangle$.
Let $n$ denote the bit length of the $\BLength$-instance $(\Sigma,g_0,k)$.

\subsection{A central target with a large length gap}
\label{sec:central-target}

In a first step, we construct from $G_0$ a new group $H$, 
which is the direct product of $G_0$ and a large cyclic group, and 
a particular generating set $\Gamma$ for $H$.
In $H$ we will define an element $h$ such that if $g_0$ can be obtained
by a product of at most $k$ many elements of $\Sigma$ then $h$ can be obtained
by a product of at most $k+1$ elements from $\Gamma$. On the other hand,
if $g_0$ cannot be obtained
by a product of at most $k$ many elements of $\Sigma$, then 
$h$ cannot  be obtained
by a product of at most $m$ elements from $\Gamma$, where $m$ is much larger than $k+1$.
This gap between $k+1$ and $m$ will be important for the rest of the reduction.

Without loss of generality, assume that $d\geq2$ and that no point of $[1,d]$ is fixed
by every element of $\Sigma$. Indeed, if $G_0=\{\id\}$, then $g_0=\id$ and
the instance is a positive instance; we map it to a fixed positive instance
of $\BDiameter$. Otherwise, delete all
common fixed points of $\Sigma$ and restrict $\Sigma$ and $g_0$ to the remaining
domain, relabelled as $[1,d]$. This is computable in polynomial time
and preserves all word lengths, since restriction is a
faithful action of $G_0$. The remaining domain has at least two points,
and $\Sigma$ is nonempty. We retain the notation $\Sigma,g_0,G_0,d$ for the
restricted instance.

The reduction needs a number $m$ much larger than $k$ and than a
polynomial in the degree of an auxiliary group. At the same time, the
numbers $q=2m-1$ and $q^2$ must be orders of permutations of polynomial
degree. The next construction provides these properties.

\subsubsection{The compact cyclic permutations}

Let $p_1,p_2,\ldots$ be the odd primes in increasing order.
For $t\geq1$, put
\begin{equation}\label{eq:clock-parameters}
 q_t=\prod_{j=1}^t p_j,
 \qquad m_t=\frac{q_t+1}{2},
 \qquad \nu_t=\sum_{j=1}^t p_j^2.
\end{equation}
Because $q_t$ is odd, $m_t$ is an integer.
A permutation with disjoint cycles of lengths
\begin{equation}\label{eq:clock-cycles}
 p_1^2,\ldots,p_t^2
\end{equation}
has degree $\nu_t$ and order $q_t^2$, since these cycle lengths are
pairwise coprime. Its $q_t$-th power has order $q_t$ on the same domain.

Choose the least $t\geq1$ for which
\begin{equation}\label{eq:parameter-choice}
 m_t\geq k+100(d+\nu_t+2)^4.
\end{equation}
We will see in a moment that such a $t$ exists.
From now on, write
\begin{equation}\label{eq:final-clock-parameters}
 q=q_t,\qquad m=m_t,\qquad
 \nu=\nu_t,\qquad e=d+\nu.
\end{equation}
In particular,
\begin{equation}\label{eq:parameter-properties}
 q=2m-1>m\geq k+100(e+2)^4>k.
\end{equation}

\begin{lemma}\label{lem:compact-clock}
The numbers in \eqref{eq:final-clock-parameters}, and a permutation with
cycle decomposition \eqref{eq:clock-cycles}, can be computed in time
polynomial in $n$. The degree $\nu$ and the bit lengths of $q,m$ are
polynomially bounded in $n$.
\end{lemma}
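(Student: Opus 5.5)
The plan is to bound the least admissible $t$ explicitly; every claim then follows from elementary prime estimates. Put $K=k+100(d+2)^4$, so $\Abs{K}=O(n)$ because $d\le n$ and $k$ has $O(n)$ bits. The right-hand side of \eqref{eq:parameter-choice} is $k+100(d+\nu_t+2)^4$. Since $p_j\le p_t$, we have $\nu_t\le t\,p_t^2$, so this right-hand side is at most $K\cdot C\,(t\,p_t^2)^4$ for an absolute constant $C$. The left-hand side satisfies $m_t>q_t/2\ge 3^t/2$.

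The key estimate uses the prime number theorem, or more simply Chebyshev/Bertrand-type bounds: $p_t=O(t\log t)$. Hence $t\,p_t^2=t^{O(1)}$, and the inequality $3^t/2\ge K\cdot C\,t^{O(1)}$ holds as soon as $t\ge c\,(\Abs{K}+1)$ for a suitable constant $c$. This shows that an admissible $t$ exists, and that the least one satisfies $t=O(\Abs{K})=O(n)$.

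From $t=O(n)$ the remaining bounds follow directly. We get $p_t=O(n\log n)$ and $\nu=\nu_t\le t\,p_t^2=O(n^3\log^2 n)$, which is polynomial. The bit length of $q$ is $\sum_{j\le t}\log p_j=O(t\log p_t)$, also polynomial, and the same holds for $m=(q+1)/2$.

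For computability, we find the first $t$ odd primes by trial division (or a sieve) up to a bound $P=O(n\log n)$, which takes polynomial time. We then try $t=1,2,\dots$ in turn. For each candidate we compute $q_t$, $m_t$ and $\nu_t$ exactly in binary; these are products and sums of polynomially many polynomial-size numbers. We test \eqref{eq:parameter-choice} with integer arithmetic and stop at the first success, which comes after $O(n)$ iterations.

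The permutation is written directly in one-line form on $[1,\nu]$. We partition $[1,\nu]$ into consecutive blocks of sizes $p_1^2,\dots,p_t^2$ and send each point to its successor within its block, cyclically. This is polynomial because $\nu$ is polynomial.

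The only delicate step is the prime-gap estimate $p_t=O(t\log t)$. An alternative route avoids it: Bertrand's postulate gives $p_t\le 2^{t+1}$, but that is too weak for polynomiality of $\nu$. So I would instead cite the elementary bound that the $t$-th prime is at most $2t\log t$ for $t\ge 2$ (or use Chebyshev's $\pi(x)\ge x/(2\log x)$). That bound yields all the polynomial estimates above.
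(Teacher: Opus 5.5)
Your proposal is correct and follows essentially the same route as the paper. Both arguments use $p_j=O(j\log j)$ to make $\nu_t$ and $\log q_t$ polynomial in $t$, compare this with $m_t\ge 3^t/2$ to bound the least admissible $t$, and then generate primes incrementally by trial division while testing \eqref{eq:parameter-choice} in binary. Your bounding of the right-hand side by $16K\nu_t^4$ gives the sharper bound $t=O(n)$, where the paper settles for a deliberately loose $O(n^2+1)$.
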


\begin{proof}
The standard prime estimate $p_j \leq \mathcal{O}(j\log(j+1))$ gives
\begin{equation}\label{eq:clock-growth}
 \nu_t \leq \mathcal{O}(t^3\log^2(t+1)),
 \qquad \log_2 q_t \leq \mathcal{O}(t\log(t+1));
\end{equation}
see, for example, Rosser and Schoenfeld~\cite{RosserSchoenfeld1962}.
On the other hand, $q_t\geq3^t$, and hence $m_t\geq3^t/2$.
The input has $k<2^{n+1}$ and $d\leq n+1$ under an explicit encoding.
Thus the left side of \eqref{eq:parameter-choice} grows exponentially
in $t$, while the part of the right side other than $k$ is polynomial
in $d$ and $t$. In particular, \eqref{eq:parameter-choice} holds for
some $t \leq \mathcal{O}(n^2+1)$. This deliberately loose bound suffices.

Generate successive primes by trial division, maintain the product and
sum in \eqref{eq:clock-parameters}, and test
\eqref{eq:parameter-choice} with binary arithmetic. 
The number of primes, their values, and the bit lengths of
all maintained integers are polynomial in $n$. The cycles in
\eqref{eq:clock-cycles} can then be written explicitly. This proves the
claim. 
\end{proof}

\subsubsection{The central element \textit{h}}

Take a permutation $\alpha$ with the cycle decomposition
\eqref{eq:clock-cycles}, on a domain disjoint from $[1,d]$. Its order is
$q^2$. With $e=d+\nu$ from \eqref{eq:final-clock-parameters}, let
\begin{equation}\label{eq:group-K}
 H=G_0\times\langle\alpha\rangle
 \leq\Sym(e).
\end{equation}
Define
\begin{align}
 \Gamma&=\{(a,\alpha):a\in \Sigma\cup\{\id\}\}
       \cup\{(g_0^{-1},\alpha^q)\},\label{eq:B}\\
 h&=(\id,\alpha^{q+k}).\label{eq:h}
\end{align}
The powers $\alpha^q$ and $\alpha^{q+k}$ are computed by iterated squaring.

\begin{lemma}\label{lem:central-gap}
The set $\Gamma$ generates $H$, and $h\in Z(H)\setminus\{\id\}$. Moreover,
\begin{align}
 \ell(g_0,\Sigma)\leq k&\ \Longrightarrow\ \ell(h,\Gamma)\leq k+1,
 \label{eq:short-h}\\
 \ell(g_0,\Sigma)>k&\ \Longrightarrow\ \ell(h,\Gamma)>m.
 \label{eq:long-h}
\end{align}
\end{lemma}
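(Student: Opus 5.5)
The plan is to verify the three claims directly from the definition of $\Gamma$. The key tool is to track, for a word over $\Gamma$, the exponent of $\alpha$ in the second component. Everything in $\Gamma$ has second component a power of $\alpha$, and $\langle\alpha\rangle$ is cyclic of order $q^2$. So the whole argument reduces to arithmetic modulo $q$ and $q^2$, combined with the bounds $k<m<q$ from \eqref{eq:parameter-properties}.

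\emph{Generation and centrality.} Since $(\id,\alpha)\in\Gamma$, we obtain $(a,\id)=(a,\alpha)(\id,\alpha)^{-1}\in\langle\Gamma\rangle$ for every $a\in\Sigma$. Hence $\langle\Gamma\rangle$ contains $G_0\times\{\id\}$ and $\{\id\}\times\langle\alpha\rangle$, which together give all of $H$. The element $h=(\id,\alpha^{q+k})$ is central, because $\id$ is central in $G_0$ and $\langle\alpha\rangle$ is abelian. It is nontrivial because $0<q+k<2q\le q^2$, using $k<q$ and $q\ge3$.

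\emph{Short case \eqref{eq:short-h}.} Suppose $g_0=a_1\cdots a_\ell$ with $a_i\in\Sigma$ and $\ell\le k$. Consider the word consisting of $(g_0^{-1},\alpha^q)$, followed by $(a_1,\alpha),\dots,(a_\ell,\alpha)$, followed by $k-\ell$ copies of $(\id,\alpha)$. It has length $k+1$. Its first component is $g_0^{-1}g_0=\id$ and its second component is $\alpha^{q+k}$, so it evaluates to $h$.

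\emph{Long case \eqref{eq:long-h}.} This is the only step with content, and I prove the contrapositive. Let $w$ be a word of length $L\le m$ over $\Gamma$ evaluating to $h$, and let $j$ be its number of occurrences of the special letter $(g_0^{-1},\alpha^q)$. Comparing second components gives
\[(L-j)+jq\equiv q+k \pmod{q^2}.\]
Reducing modulo $q$ gives $L-j\equiv k\pmod q$. Both $L-j\le m<q$ and $k<q$ lie in $[0,q-1]$, so $L-j=k$. Substituting back gives $jq\equiv q\pmod{q^2}$, that is, $j\equiv1\pmod q$. Since $0\le j\le m<q$, this forces $j=1$.

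So $w$ contains exactly one special letter, together with $k$ letters of the form $(a,\alpha)$ with $a\in\Sigma\cup\{\id\}$. Write the first component as $u\,g_0^{-1}v=\id$, where $u,v$ are products of elements of $\Sigma\cup\{\id\}$ of total length $k$. Then $g_0=vu$. Deleting the identity factors expresses $g_0$ as a product of at most $k$ elements of $\Sigma$, so $\ell(g_0,\Sigma)\le k$. Therefore $\ell(g_0,\Sigma)>k$ forces $\ell(h,\Gamma)>m$.

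The main point to get right is this modular bookkeeping, specifically the use of $m<q$ to pin down both $L-j$ and $j$ exactly. The rest is routine.
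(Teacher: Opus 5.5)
Your proof is correct and matches the paper's argument essentially step by step. Both use the generation trick $(a,\alpha)(\id,\alpha)^{-1}=(a,\id)$ and the padded word for the short case; you put the special letter first rather than last. Both then count the $\alpha$-exponent modulo $q$ and $q^2$ using $m<q$, and finish with the rearrangement $g_0=vu$. The only cosmetic difference is that the paper first turns the congruence into an integer equality via $p+qp'\le qm<q^2$, whereas you reduce modulo $q$ directly and then deduce $j\equiv 1 \pmod q$.
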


\begin{proof}
By \eqref{eq:B}, the generator $(\id,\alpha)$ belongs to $\Gamma$. Therefore
$\langle \Gamma\rangle$ contains
\[
 (a,\alpha)(\id,\alpha)^{-1}=(a,\id)
 \quad(a\in \Sigma).
\]
Together with $(\id,\alpha)$, these elements generate $H$. Equation~\eqref{eq:h}
shows that $h$ is central. Since $0<q+k<q^2$ by
\eqref{eq:parameter-properties}, it is not the group identity.

Suppose $g_0$ can be represented by a word $v \in \Sigma^*$
of length at most $k$. Pad $v$ with occurrences of $\id$
to length $k$, replace each letter $a$ by $(a,\alpha)$,
and append $(g_0^{-1},\alpha^q)$. The resulting word over $\Gamma$ has value $h$ and
length $k+1$, which shows \eqref{eq:short-h}.

For \eqref{eq:long-h}, suppose that a word $w \in \Gamma^*$ of length at most $m$
represents $h$. Let $p'$ be the number of occurrences of
$(g_0^{-1},\alpha^q)$ and let $p$ be the number of all other letters.
Projecting on the second component from $\langle \alpha \rangle$ gives
\begin{equation}\label{eq:central-counts}
 p+qp'\equiv k+q\pmod{q^2}.
\end{equation}
Since $p+p'\leq m<q$ by \eqref{eq:parameter-properties}, we have
$0\leq p+qp'\leq qm<q^2$, while $0<k+q<q^2$.
Thus \eqref{eq:central-counts} is an equality. Reducing it modulo $q$
gives $p=k$, since $p,k<q$, and then $p'=1$.
In particular, $|w| = k+1$. The projection of $w$ on the first component
has the form $x g_0^{-1}y=\id$,
where $x$ and $y$ are products of altogether $p=k$ letters from
$\Sigma\cup\{\id\}$. This equality implies $g_0=yx$. Deletion of all occurrences 
of $\id$ in $yx$ gives $\ell(g_0,\Sigma)\leq k$. This proves
\eqref{eq:long-h}.
\end{proof}

Let $e=d+\nu\geq2$.
The group $H$ can be realized as a permutation group on $[1,e]$, where $G_0$ acts
on $[1,d]$ and $\alpha$ acts on $[d+1,e]$.
By our assumption on $\Sigma$, every point of $[1,d]$ is moved
by some element of $\Sigma$. Moreover, $\alpha$ moves every point on its domain of size $\nu$.
Thus no element of $[1,e]$ is fixed by all elements of $H=\langle \Gamma\rangle$. 
In other words, we have
\begin{equation}\label{eq:no-global-fixed}
 \Stab_H(i)<H\quad(i\in[1,e]).
\end{equation}

\subsection{A set of positions with bounded overlaps}
\label{sec:positions}

In the next section, we will go from the group $H$ in \eqref{eq:group-K}
to the wreath product $W$ of $H$ and a cyclic group $\Z_r$ for some $r$
that is polynomially bounded in $e$. Moreover, we will also define a carefully
chosen generating set $\Delta$ of $W$. This generating set $\Delta$ has to be
computed by our polynomial time reduction. In order to ensure that its size
is polynomially bounded we will construct a subset $S\subseteq\Z_r$ with bounded
overlaps under nonzero translations. More precisely, for every $t \in \Z_r \setminus \{0\}$ 
there will be at most three elements $x\in\Z_r$ such that $x,x+t\in S$.
Choose the least point $i_0\in[1,e]$ moved by $h$, and put $i_1=i_0h\in[1,e]$.
Then $1\le i_0<i_1$, since $i_1$ is also moved by $h$. Put
\begin{align}
 \lambda&=2e^2+1,& s_i&=\lambda i+i^2\quad(i\in[0,e]),
 \label{eq:sidonic}\\
 a&=s_{i_0},& b&=s_{i_1},& \tau&=b-a,
 \label{eq:special-positions}\\
 r&=10s_e+5,& S&=\{s_0,s_1,\ldots,s_e, \tau\}.
 \label{eq:R-S}
\end{align}
Note that $0 = s_0 \in S$.
We consider $S$ as a subset of $\Z_r$
and represent elements of $\Z_r$ by integers in $[0,r-1]$.
Arithmetic operations are performed modulo $r$. Define
\begin{equation}\label{eq:Q-I}
 S'=S\setminus\{0,\tau\},
 \qquad I_t=\{x\in\Z_r:x\in S,\ x+t\in S\}
 \quad(t\in\Z_r\setminus\{0\}).
\end{equation}
In \Cref{subsec:fillers}, we will associate a point stabilizer of $H$
with each position in $S'$. The two remaining positions in $S$ are
$0$ and $\tau$.

\begin{lemma}\label{lem:positions}
The parameters in \eqref{eq:sidonic}--\eqref{eq:Q-I} have the following
properties:
\begin{enumerate}[label=\textup{(\roman*)}]
\item $\tau\notin\{s_0,\ldots,s_e\}$, $|S|=e+2$, 
$0<s<r/2$ for every $s\in S\setminus\{0\}$, and
$-\tau \notin S$.
\item $|I_t|\leq3$ for every nonzero $t\in\Z_r$.
\item The elements of $S\setminus\{0\}$ generate $\Z_r$.
\item $r\leq40e^3+5$, and $r \cdot e+k+3\leq m$.
\item Let $t \in \Z_r\setminus \{0\}$ and $x \in I_t$. If \ref{item-a} or \ref{item-b} below holds then $\{x,x+t\} \cap S' \neq \emptyset$:
\begin{enumerate}[label=\textup{(\alph*)}]
\item \label{item-a} $t\notin\{\tau,-\tau\}$,
\item \label{item-b} $t \in S$ and $x \neq 0$.
\end{enumerate}
\end{enumerate}
\end{lemma}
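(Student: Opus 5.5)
The whole lemma rests on two facts. First, every nonzero element of $S$ is an integer strictly between $0$ and $r/2$, so differences of elements of $S$ do not wrap around modulo $r$. Second, the numbers $s_i=\lambda i+i^2=i(\lambda+i)$ behave like two-digit numbers in base $\lambda$, because $\lambda=2e^2+1$ exceeds every quadratic term that appears. I would prove (i) and (iv) first by direct estimates, and then deduce (ii), (iii) and (v) from them.

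For (i), the values $s_0<s_1<\dots<s_e$ are strictly increasing, hence distinct. Put $\delta=i_1-i_0\geq1$, so that $\tau=\delta\lambda+\delta(i_0+i_1)$ with $0<\delta(i_0+i_1)\leq 2e^2<\lambda$. If $\tau=s_j=j\lambda+j^2$, then $j\geq1$ and $j^2\leq e^2<\lambda$. Uniqueness of the representation $c\lambda+\rho$ with $0\le\rho<\lambda$ then forces $j=\delta$ and $i_0+i_1=j=i_1-i_0$, hence $i_0=0$. This contradicts $i_0\geq1$, so $|S|=e+2$. Every $s\in S\setminus\{0\}$ satisfies $0<s\leq s_e<r/2$, since $\tau<s_{i_1}\le s_e$ and $r/2>5s_e$. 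Consequently $-\tau$ is represented by $r-\tau>r/2$, so $-\tau\notin S$. For (iv), I would use $s_e=2e^3+e^2+e\leq 4e^3$, which gives $r\leq40e^3+5$. Then $re+k+3\leq 40e^4+5e+k+3\leq k+100(e+2)^4\leq m$ by \eqref{eq:parameter-properties}.

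The main step is (ii). Take $x\in I_t$ and set $y=x+t$. Both $x$ and $y$ lie in $[0,r/2)$, so the integer $y-x$ lies in $(-r/2,r/2)$. It is therefore the unique representative of $t$ in that interval, and it is nonzero. Thus $I_t$ injects into the set of ordered pairs $(x,y)$ of distinct elements of $S$ with a fixed integer difference $D$. I would count these pairs by type.
\begin{itemize}
\item Pairs $(s_i,s_j)$: here $D=(j-i)\lambda+(j^2-i^2)$ with $|j^2-i^2|<\lambda$. The base-$\lambda$ argument, with care for signs, determines $j-i$ and then $j+i$. So there is at most one such pair.
\item Pairs $(s_i,\tau)$ and $(\tau,s_i)$: here $D=\tau-s_i$ or $D=s_i-\tau$, respectively, and each equation determines $i$ uniquely. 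So there is at most one pair of each of these two types.
\end{itemize}
This gives $|I_t|\leq3$. The point needing care is to justify the no-wraparound lifting cleanly and to treat negative $D$ in the base-$\lambda$ uniqueness, for example by swapping the roles of $x$ and $y$.

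For (iii), note that $e\geq2$, so $s_1=\lambda+1$ and $s_2=2\lambda+4$ both lie in $S\setminus\{0\}$. Then $2s_1-s_2=-2$ lies in the subgroup they generate. Since $r=10s_e+5$ is odd, $2$ generates $\Z_r$. For (v), suppose both $x$ and $x+t$ lie outside $S'$. Then both lie in $\{0,\tau\}$, and since $t\neq0$ they are distinct, so $t=\pm\tau$. This contradicts \ref{item-a}. Under \ref{item-b}, $x\neq0$ forces $x=\tau$ and $x+t=0$, so $t=-\tau\in S$, contradicting (i).
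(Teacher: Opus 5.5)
Your proof is correct and follows essentially the paper's route. The paper proves once that all nonzero differences $s_i-s_j$ are distinct, by comparing two representations whose quadratic parts differ by at most $2e^2<\lambda$; this is what your base-$\lambda$ argument amounts to. It uses this fact both for $\tau\notin\{s_0,\ldots,s_e\}$ and for the at-most-one pair inside $\{s_0,\ldots,s_e\}$, counts the two extra pairs $(\tau,\tau+t)$ and $(\tau-t,\tau)$ directly, and treats (iii)--(v) as you do up to phrasing (e.g.\ your $2s_1-s_2=-2$ versus the paper's $\gcd(s_1,s_2)=2$).
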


\begin{proof}
First consider $S_0=\{s_0,\ldots,s_e\}$. The sequence
$s_0,\ldots,s_e$ from \eqref{eq:sidonic} is strictly increasing,
and all its nonzero differences are distinct as integers.
Indeed, suppose $s_i-s_j=s_v-s_w$ with $i\neq j$ and $v\neq w$.
Then
\begin{equation}\label{eq:unique-differences}
 \lambda\bigl((i-j)-(v-w)\bigr)
 =(v^2-w^2)-(i^2-j^2).
\end{equation}
The right side has absolute value at most $2e^2<\lambda$. Hence
$i-j=v-w$. Substitution into \eqref{eq:unique-differences}, and division
by this common nonzero difference, gives $i+j=v+w$. Thus $(i,j)=(v,w)$.
Since $i_0\geq1$ and $i_1>i_0$, the difference
$\tau=s_{i_1}-s_{i_0}$ is positive and less than $s_e$.
It cannot equal an element $s_j \in S_0$, since
$s_{i_1}-s_{i_0}=s_j-s_0$ would contradict uniqueness of the
nonzero differences. Thus $|S|=e+2$. Every nonzero element of $S$
is at most $s_e<r/2$, which also gives $r-\tau \notin S$ and hence $-\tau \notin S$.
This proves (i).

Every difference between points of $S_0$ lies in $[-s_e,s_e]$,
and $r>2s_e$, so distinct such differences remain distinct modulo
$r$. Consequently, for any fixed nonzero $t$, there is at most one
pair $(x,x+t)$ with both endpoints in $S_0$.
Adding $\tau$ gives at most two further pairs with difference $t$, namely $(\tau, \tau+t)$ and $(\tau-t,\tau)$.
Hence $|I_t|\leq1+2=3$, proving (ii).

For (iii), observe that
$s_1=2e^2+2$ and $s_2=4e^2+6$, which implies 
$\gcd(s_1,s_2)=2$.
As $r$ is odd, $2$ generates $\Z_r$ and hence  $s_1$ and $s_2$ together generate
$\Z_r$.

For (iv), $s_e=2e^3+e^2+e\leq4e^3$, whence
$r\leq40e^3+5$. It follows that
\[
 r \cdot e+3+k\leq40e^4+5e+3+k<100(e+2)^4+k \le m,
\]
where the last inequality follows from \eqref{eq:parameter-properties}.

For (v) take $t \in \Z_r\setminus \{0\}$ and $x \in I_t$ and assume that 
$\{x,x+t\} \cap S' = \emptyset$. We show that neither (a) nor (b) holds.
By the definitions of $S'$ and $I_t$ in \eqref{eq:Q-I},
we have $\{x,x+t\} = \{0,\tau\}$. This implies $t \in \{-\tau,\tau\}$, so (a) cannot hold.
Moreover, if  $t \in S \setminus \{0\}$ then, since $-\tau \notin S$ by (i), 
we have $t = \tau$. But then $\{x,x+\tau\} = \{x,x+t\} = \{0,\tau\}$ implies
$x=0$ or $x+\tau=0$. Since $0 < x+\tau < r$, $x+\tau=0$ cannot hold.
Hence, $x=0$ and (b)
cannot hold.
\end{proof}
Note that by point (i) of \Cref{lem:positions} we have $S' =\{s_1,\ldots,s_e\}$.

\subsection{The wreath-product generating set}
\label{sec:wreath}

In this section we enlarge the group $H$ further by forming the wreath product
of $H$ and the cyclic group $\Z_r$ with $r$ from \eqref{eq:R-S}.
We define a special generating set $\Delta$ of $W$ whose elements are 
of four different types: fillers, entries, exits, and bridges. Fillers have a zero shift,
whereas entries, exits and bridges have a nonzero shift.
In Section~\ref{sec:coverage} we prove the important property of $\Delta$ and $W$:
if our initial $\BLength$-instance is positive (i.e., $\ell(g_0,\Sigma) \leq k$) then 
every element of $W$ can be produced by a word of length at most $m$ over the generators
from $\Delta$. Moreover, the word consists of filler words separated
by one entry followed by one exit, or by one bridge.
This form will be important in \Cref{sec:final-clock}.

For the group $H$ from \eqref{eq:group-K} and the integer $r$ from
\eqref{eq:R-S}, let
\begin{equation}\label{eq:wreath-product}
 W=H^{\Z_r}\rtimes\Z_r,
\end{equation}
where $H^{\Z_r}$ is the group of functions $\zeta:\Z_r\to H$ with
pointwise multiplication. For $t\in\Z_r$, put
$\theta_t(\zeta)(x)=\zeta(x+t)$, and define the product by
\begin{equation}\label{eq:wreath-multiplication}
 (\zeta,t)(\eta,s)=(\zeta\,\theta_t(\eta),t+s).
\end{equation}
This is the well-known wreath product construction. The group $W$ is also denoted by $H \wr \Z_r$.
Let $\one : \Z_r \to H$ be the function with constant value $\id$. The inverse of $(\zeta,t)$
is $(\theta_{-t}(\zeta^{-1}), -t)$,
where $\zeta^{-1}(x) = \zeta(x)^{-1}$.

The natural faithful action of $W$ on $[1,e]\times\Z_r$ is
\begin{equation}\label{eq:wreath-action}
 (i,x)(\zeta,t)=(i \zeta(x),x+t).
\end{equation}
Indeed, applying $(\zeta,t)$ and then $(\eta,s)$ gives
$(i \zeta(x)\eta(x+t),x+t+s)$. An ordering of the pairs in $[1,e] \times \Z_r$ identifies
this action with permutations of $[1,r\cdot e]$.

The second component $t$ of an element $w = (\zeta, t) \in W$ will also be called
the \emph{shift} of $w$. Elements of $\Z_r$ will be called \emph{positions}, and the \emph{value of $w$ at position}
$z$ is $\zeta(z)$. An element $(\one, t) \in W$ is also called a \emph{pure shift}.

\subsubsection{Fillers}
\label{subsec:fillers}

Recall the points $i_0,i_1=i_0h$ chosen in \Cref{sec:positions}.
We define the following point stabilizers:
\begin{equation}\label{eq:stabilizers}
 K_i=\Stab_H(i)\quad(i\in[1,e]),\qquad K=K_{i_0}.
\end{equation}
Using \Cref{lem:fast}, one can compute in polynomial time generating sets for these groups.
Since $h$ is central, $i_1 g = (i_0h)g=(i_0g)h$ for every $g\in H$.
It follows that $K_{i_1}=K_{i_0}=K$.
By \eqref{eq:no-global-fixed} and the choice of $i_0$ as the least point moved by $h$, we have
\begin{equation}\label{eq:H-properties}
 K_i<H\quad(i\in[1,e]),\qquad K<H,\qquad h\notin K.
\end{equation}
For $S' =\{s_1,\ldots,s_e\}$ from \eqref{eq:Q-I}, associate a point stabilizer $J_s$ of $H$
with every position $s\in S'$ by
\begin{equation}\label{eq:J-s}
 J_{s_i}=K_i\quad(i\in[1,e]),
\end{equation}
which implies 
\begin{equation}\label{eq:J-ab}
J_a=J_b=K.
\end{equation}
We call the positions in $S'$ \emph{stabilizer positions} because of
this assignment. By \Cref{lem:fast},
we can also compute in polynomial time for all $s \in S'$
the sets $T(H)$ and $T(J_s)$ as well as transversals $\overline{H/J_s}$ and
$\overline{J_s\backslash H}$, where each of them contains $\id$ and 
$|\overline{H/J_s}| = |\overline{J_s\backslash H}| \leq e$
by \eqref{eq:transversals}.
Using $S$ and the subgroups $J_s$ we define $C_x \subseteq H$ for each position $x\in\Z_r$
as follows:
\begin{equation}\label{eq:coordinate-alphabets}
 C_x=
 \begin{cases}
  \{\id\} & \text{if } x=0,\\
  \Gamma & \text{if }  x=\tau \text{ (see \eqref{eq:B})},\\
  T(J_x) & \text{if }  x\in S',\\
  T(H) & \text{if } x\notin S.
 \end{cases}
\end{equation}
For $g\in H$ and $x\in\Z_r$, define $\delta_{x,g}:\Z_r\to H$ by
\begin{equation}\label{eq:coordinate-function}
 \delta_{x,g}(y)=
 \begin{cases}
  g& \text{if } y=x,\\
  \id& \text{if } y\neq x.
 \end{cases}
\end{equation}
Using $C_x$ from \eqref{eq:coordinate-alphabets}, define the set of
\emph{fillers} by
\begin{equation}\label{eq:filler-alphabet}
 \mathcal F= \{(\delta_{x,g},0):x\in\Z_r,\ g\in C_x\} \subseteq W.
\end{equation}
Every filler $(\eta,0)$ has shift zero.
The first component $\eta$ satisfies $\eta(0) = \id$ and there is at most one 
$x \in \Z_r \setminus \{0\}$ with $\eta(x) \neq \id$.
Note that $(\one,0) = (\delta_{0,\id},0) \in \mathcal F$.

A filler word (i.e., a sequence of fillers) evaluates in $W$ to an element of the form 
$(\eta,0)$ for a function $\eta:\Z_r\to H$.
The value of $\eta$ on a position $x$ can be 
 built by a sequence of fillers $(\delta_{x,g},0)$. 
 In particular, a prescribed value from $H$
at a position  $x \notin S$, or a prescribed value from $J_x$ at a
position $x\in S'$, can be obtained with at most $e$ fillers, by
\eqref{eq:coordinate-alphabets} and \Cref{lem:fast}(ii).

\subsubsection{Simultaneous coordinate completion}
\label{subsec:completion}

Let $(\chi,t)\in W$ be arbitrary, where $W$ is defined in
\eqref{eq:wreath-product}, and let $(\eta,0)$ and $(\xi,0)$ be the values of
two words over the filler set \eqref{eq:filler-alphabet}.
Equation~\eqref{eq:wreath-multiplication}
gives
\begin{equation}\label{eq:completion-equation}
 (\eta,0)(\chi,t)(\xi,0)=(\zeta,t),
 \qquad \zeta(x)=\eta(x)\chi(x)\xi(x+t).
\end{equation}
This equation is the reason for the overlap sets $I_t$ in \eqref{eq:Q-I}.
In \Cref{subsec:entries,subsec:bridges}, we will choose specific
elements $(\chi,t)$ as so-called entries and bridges.

For $t\in \Z_r \setminus \{0\}$ and $x\in I_t$ such that $\{x,x+t\} \cap S' \neq \emptyset$, we define the following set:
\begin{equation}\label{eq:Vtx}
 V_{t,x}=
 \begin{cases}
  \overline{J_x\backslash H}& \text{if } x\in S',\\
  \overline{H/J_{x+t}}& \text{if } x\notin S'\text{ and }x+t\in S'.
 \end{cases}
\end{equation}
Each such set contains $\id$ and has at most $e$ elements. Recall that \Cref{lem:positions}(v) gives sufficient conditions for 
$\{x,x+t\} \cap S' \neq \emptyset$.

\begin{lemma}\label{lem:coordinate-completion}
Consider $t\in\Z_r\setminus\{0\}$ and $E\subseteq\Z_r$ such that
$\{x,x+t\}\cap S'\neq\emptyset$ for every $x\in E\cap I_t$. Let $\zeta:\Z_r\to H$ satisfy
$\zeta(z)=\id$ for every $z\notin E$.
Then there exist $(\eta,0),(\chi,t),(\xi,0)\in W$ with the following properties:
\begin{itemize}
\item $(\zeta,t)=(\eta,0)(\chi,t)(\xi,0)$.
\item $\chi(x)\in V_{t,x}$ for all $x\in E\cap I_t$ and
$\chi(x)=\id$ for all $x\notin E\cap I_t$.
\item The elements $(\eta,0)$ and $(\xi,0)$ can be represented by filler
words $U$ and $V$, respectively, with $|U|+|V| \leq e|E|$. 
All fillers in $U$ have the form $(\delta_{x,g},0)$ with $x \in E$
and all fillers in $V$ have the form $(\delta_{x,g},0)$ with $x \in E+t=\{x+t:x\in E\}$.
\end{itemize}
\end{lemma}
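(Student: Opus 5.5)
The plan is to construct $\eta,\chi,\xi$ position by position. By \eqref{eq:completion-equation} the required identity is $\zeta(x)=\eta(x)\chi(x)\xi(x+t)$ for all $x\in\Z_r$. Since $t\neq0$, the map $x\mapsto x+t$ is a bijection. Hence the triple of values $(\eta(x),\chi(x),\xi(x+t))$ can be chosen independently for each $x$, and each value of $\eta$, $\chi$ and $\xi$ is involved in exactly one of these equations. For $x\notin E$ we put $\eta(x)=\chi(x)=\xi(x+t)=\id$, which is consistent with $\zeta(x)=\id$. It remains to treat each $x\in E$ and to spend at most $e$ fillers on it. Those fillers sit at position $x$ (contributing to $U$) or at position $x+t$ (contributing to $V$). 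This gives $|U|+|V|\leq e|E|$ and the stated support conditions.

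The only filler constraint that matters is the following. By \eqref{eq:coordinate-alphabets}, a value can be built freely only at positions outside $S$, using $T(H)$, or as an element of $J_y$ at a position $y\in S'$, using $T(J_y)$. By \Cref{lem:fast}(ii), applied to $H\leq\Sym(e)$ and $J_y\leq\Sym(e)$, at most $e$ fillers suffice in either case. The positions $0$ and $\tau$ must never receive fillers, so the case split must avoid them.

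First case, $x\in E\setminus I_t$. Then $x\notin S$ or $x+t\notin S$, and we set $\chi(x)=\id$. If $x\notin S$, we put $\eta(x)=\zeta(x)$ and $\xi(x+t)=\id$, realized by at most $e$ fillers from $T(H)$ at position $x$. Otherwise $x\in S$ and $x+t\notin S$. Then we put $\eta(x)=\id$ and $\xi(x+t)=\zeta(x)$, realized by at most $e$ fillers from $T(H)$ at position $x+t\notin S$.

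Second case, $x\in E\cap I_t$. By hypothesis $\{x,x+t\}\cap S'\neq\emptyset$. If $x\in S'$, we use the decomposition $H=\bigsqcup_{v\in\overline{J_x\backslash H}}J_xv$ and write $\zeta(x)=jv$ with $j\in J_x$ and $v\in V_{t,x}=\overline{J_x\backslash H}$. We set $\eta(x)=j$, $\chi(x)=v$ and $\xi(x+t)=\id$; the element $j$ costs at most $e$ fillers from $T(J_x)$ at position $x$. If instead $x\notin S'$ and $x+t\in S'$, we write $\zeta(x)=vj$ with $v\in\overline{H/J_{x+t}}=V_{t,x}$ and $j\in J_{x+t}$. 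We set $\eta(x)=\id$, $\chi(x)=v$ and $\xi(x+t)=j$, costing at most $e$ fillers from $T(J_{x+t})$ at position $x+t$.

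In all cases $\chi(x)=\id$ for $x\notin E\cap I_t$, and $\chi(x)\in V_{t,x}$ on $E\cap I_t$. Fillers at distinct positions commute, since they have shift $0$ and act pointwise, so the concatenated words $U$ and $V$ evaluate to $(\eta,0)$ and $(\xi,0)$. The step needing most care is to check that no filler is ever placed at $0$ or $\tau$, and that every position used lies outside $S$ or in $S'$ with the value in the right subgroup $J$. This is exactly what the case analysis, together with the hypothesis on $E\cap I_t$, is designed to ensure.
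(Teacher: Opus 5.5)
Your proof is correct and follows the paper's argument essentially step for step. It solves $\zeta(x)=\eta(x)\chi(x)\xi(x+t)$ position by position using the same four-way case split (outside $I_t$ with $x\notin S$ or $x+t\notin S$; inside $I_t$ with $x\in S'$ via $\overline{J_x\backslash H}$, or $x+t\in S'$ via $\overline{H/J_{x+t}}$), and it closes with the same observation that fillers at distinct positions commute. Your explicit check that no nontrivial filler ever lands at $0$ or $\tau$ is a useful addition that the paper leaves implicit.
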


\begin{proof}
Set $\chi(x)=\id$ for $x\notin E\cap I_t$, $\eta(x)=\id$ for
$x\notin E$, and $\xi(x)=\id$ for $x\notin E+t$.
For $x\in E$, we choose the remaining values to satisfy
$\zeta(x)=\eta(x)\chi(x)\xi(x+t)$ from \eqref{eq:completion-equation}.

Fix $x\in E$. By \eqref{eq:Q-I}, if $x\notin I_t$, then $x \notin S$ or $x+t \notin S$.
If $x\notin S$, set  $\xi(x+t)=\id$ and
$\eta(x)=\zeta(x) \in H$, which can be written as a product of at most $e$ elements 
from $T(H) = C_x$.
If $x \in S$ and $x+t\notin S$, choose
$\eta(x)=\id$ and $\xi(x+t)=\zeta(x) \in H$, which can be written as a product of at most $e$ elements 
from $T(H) = C_{x+t}$.
In both cases we obtain $\zeta(x)=\eta(x)\chi(x)\xi(x+t)$, since
$\chi(x)=\id$.

Suppose next that $x\in I_t$ and $x\in S'$. Since 
$H=J_x \cdot (\overline{J_x\backslash H})$, we can choose $\eta(x)\in J_x$ and
$\chi(x)\in\overline{J_x\backslash H} = V_{t,x}$ such that
$\zeta(x)=\eta(x)\chi(x)$, and $\xi(x+t)=\id$.
The element $\eta(x)$ can be written as a product of 
at most $e$ elements from $T(J_x) = C_x$.

If $x\in I_t$ but $x\notin S'$, then $x+t\in S'$ by hypothesis. Choose $\eta(x)=\id$.
Moreover, since $H= (\overline{H/J_{x+t}}) \cdot J_{x+t}$, we can choose
$\chi(x)\in\overline{H/J_{x+t}} = V_{t,x}$ and  $\xi(x+t)\in J_{x+t}$ such that
$\zeta(x)=\chi(x)\xi(x+t)$. The element $\xi(x+t)$ can be 
written again as a product of 
at most $e$ elements from $T(J_{x+t}) = C_{x+t}$.

We can make the above choices independently for each position $x \in E$.
Also, the order in which the positions $x \in E$ are considered is not
important, since fillers $(\delta_{x,g},0)$ and $(\delta_{x',g},0)$ for $x \neq x'$
commute.
\end{proof}

\subsubsection{Entries and exits}
\label{subsec:entries}

For $s\in S'$, use $J_s$ from \eqref{eq:J-s} and $h$ from \eqref{eq:h} to define the
set of allowed representatives of left cosets
\begin{equation}\label{eq:allowed-representatives}
 D_s=\{g\in\overline{H/J_s}:h\notin gJ_s\}.
\end{equation}
Exactly one left coset of $J_s$ contains $h$. As $J_s<H$, the set
$D_s$ is nonempty. It is computable in polynomial time: the condition
$h\in gJ_s$ is equivalent to $g^{-1}h\in J_s$.
Notice that
\begin{equation}\label{eq:allowed-cover}
D_s J_s = H\setminus hJ_s.
\end{equation}
Note that $\id \in D_a$ and $\id \in D_b$, since
$J_a=J_b=K$ by \eqref{eq:J-ab} and $h\notin K$ by \eqref{eq:H-properties}.
For each $s\in S\setminus\{0\} = S' \cup \{\tau\}$, define the set $\mathcal E_s$ of \emph{entries}
at $s$ as the set of all $(\chi,s) \in W$ with
\begin{alignat}{2}
 \chi(0)&=\id&&\quad\text{ if }s=\tau,\label{eq:entry-zero-hard}\\  
 \chi(0)&\in D_s&&\quad\text{ if }s\in S',\label{eq:entry-zero-safe}\\
 \chi(x)&\in V_{s,x}&&\quad\text{ if }x\in I_s\setminus\{0\},\label{eq:entry-other}\\
 \chi(x)&=\id&&\quad\text{ if }x\notin I_s.\label{eq:entry-outside}
\end{alignat}
Here $0\in I_s$ by \eqref{eq:Q-I}, since $0,s\in S$.
The sets $V_{s,x}$ are defined for all $s \in S \setminus \{0\}$ and $x\in I_s\setminus\{0\}$ by \eqref{eq:Vtx} and
\Cref{lem:positions}(v).

For each $s\in S\setminus\{0\} = S'\cup\{\tau\}$,
the \emph{exit} at $s$ is the element 
\begin{equation}\label{eq:exit}
X_s=(\one,-s) \in W.
\end{equation}
Finally, define the set of all entries and the set of all exits:
\begin{equation}\label{eq:entries-exits}
 \mathcal E=\bigcup_{s\in S\setminus\{0\}}\mathcal E_s,
 \qquad \mathcal X=\{X_s:s\in S\setminus\{0\}\}.
\end{equation}

\subsubsection{Bridges}
\label{subsec:bridges}

For each $t\in\Z_r\setminus\{0,\tau,-\tau\}$, define $\mathcal B_t$
to contain all $(\chi,t) \in W$ satisfying
\begin{equation}\label{eq:bridge-definition}
 \chi(x)\in V_{t,x}\quad(x\in I_t),
 \qquad \chi(x)=\id\quad(x\notin I_t).
\end{equation}
 The sets $I_t$ and $V_{t,x}$ are
those of \eqref{eq:Q-I} and \eqref{eq:Vtx}. The latter sets are defined
because of \Cref{lem:positions}(v). Define the set $ \mathcal B$ of \emph{bridges} and the set $\Delta$ of all generators as
\begin{equation}\label{eq:unclocked-alphabet}
 \mathcal B=\bigcup_{t\notin\{0,\tau,-\tau\}}\mathcal B_t,
 \qquad
 \Delta=\mathcal F\cup\mathcal E\cup\mathcal X\cup\mathcal B.
\end{equation}
Note that the second component of each non-filler generator is nonzero.
Also notice that the four sets $\mathcal F$, $\mathcal E$, $\mathcal X$, and $\mathcal B$
are not pairwise disjoint. More precisely, $\mathcal E \cap \mathcal B \neq \emptyset \neq \mathcal X \cap \mathcal B$.

\begin{lemma}\label{lem:unclocked-size}
The set of permutations $\Delta \subseteq \Sym(r \cdot e)$ from \eqref{eq:unclocked-alphabet} can
be produced in polynomial time. We have
\begin{equation}\label{eq:alphabet-size}
|\Delta| \le 1+|\Gamma|+r \cdot e^2+(e+1)(e^3+1)+r \cdot e^3 .
\end{equation}
\end{lemma}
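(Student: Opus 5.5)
We bound the four parts of $\Delta=\mathcal F\cup\mathcal E\cup\mathcal X\cup\mathcal B$ separately and add the bounds, since $|\Delta|\le|\mathcal F|+|\mathcal E|+|\mathcal X|+|\mathcal B|$ even though the parts overlap. Polynomial-time computability then follows because every part is listed explicitly from data that \Cref{lem:fast} computes in polynomial time. The parameters involved are also polynomial: $r\le 40e^3+5$ by \Cref{lem:positions}(iv), $e$ and $|\Gamma|\le|\Sigma|+2$ are polynomial in $n$ by \Cref{lem:compact-clock}, and each element $(\zeta,t)$ is turned into a permutation of $[1,e]\times\Z_r$ via \eqref{eq:wreath-action} in time $\mathcal O(r e)$.

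\textbf{Fillers.} By \eqref{eq:filler-alphabet}, $|\mathcal F|\le\sum_x|C_x|$, and we evaluate \eqref{eq:coordinate-alphabets} position by position. Position $0$ gives $1$ and position $\tau$ gives $|\Gamma|$. Every other position has $|C_x|\le e^2$ by \Cref{lem:fast}(i), applied to $T(J_x)$ or $T(H)$ as a subgroup of $\Sym(e)$. Hence $|\mathcal F|\le 1+|\Gamma|+r e^2$.

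\textbf{Exits.} There is one exit per element of $S\setminus\{0\}$, so $|\mathcal X|=e+1$ by \Cref{lem:positions}(i).

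\textbf{Entries and bridges.} An element of $\mathcal E_s$ is determined by $\chi(0)$ together with the values $\chi(x)\in V_{s,x}$ for $x\in I_s\setminus\{0\}$. Since $|I_s|\le3$ by \Cref{lem:positions}(ii), there are at most two such $x$, and each $V_{s,x}$ has at most $e$ elements.
\begin{itemize}
\item For $s=\tau$, the value $\chi(0)=\id$ is forced, giving at most $e^2$ entries.
\item For $s\in S'$, we have $|D_s|\le e$, giving at most $e^3$ entries.
\end{itemize}
Over the $e+1$ values of $s$ this gives $|\mathcal E|\le(e+1)e^3$, which together with $|\mathcal X|=e+1$ accounts for the term $(e+1)(e^3+1)$. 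For bridges, each $t$ has at most three free positions $x\in I_t$, each with at most $e$ choices in $V_{t,x}$, so $|\mathcal B_t|\le e^3$ and $|\mathcal B|\le re^3$.

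Summing the four bounds gives exactly \eqref{eq:alphabet-size}.

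\textbf{Computability.} The algorithm computes:
\begin{itemize}
\item the sets $I_t$, by checking all pairs of elements of $S$;
\item the transversals $\overline{J_x\backslash H}$ and $\overline{H/J_{x+t}}$ and the generating sets $T(\cdot)$, via \Cref{lem:fast};
\item the sets $D_s$, by testing $g^{-1}h\in J_s$ with the membership test of \cite{FHL1980}.
\end{itemize}
It then enumerates the product sets above. The only point needing care is the case distinction in $V_{t,x}$: we must check that whenever an entry or bridge refers to $V_{t,x}$, the defining hypothesis $\{x,x+t\}\cap S'\ne\emptyset$ holds. This is supplied by \Cref{lem:positions}(v), part (a) for bridges and part (b) for entries with $x\ne0$. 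Once this is settled, no step is a real obstacle; the main thing is to organise the count so that it matches the displayed expression.
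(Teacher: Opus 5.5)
Your proposal is correct and follows essentially the same argument as the paper. You bound the fillers via $\sum_x |C_x|$, count one exit per element of $S\setminus\{0\}$, and use that entries and bridges have at most three free positions with at most $e$ choices each, with polynomial-time computability coming from \Cref{lem:fast}. Your sharper count of $e^2$ entries at $\tau$ and your explicit check via \Cref{lem:positions}(v) that the sets $V_{t,x}$ are well defined are harmless refinements of the paper's proof.
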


\begin{proof}
There are at most $1+|\Gamma|+r \cdot e^2$ fillers in \eqref{eq:filler-alphabet},
by \Cref{lem:fast}(i) and \eqref{eq:coordinate-alphabets}.
The exits in \eqref{eq:entries-exits} have $|S|-1=e+1$ different shifts.
For an entry $(\chi,s)$ at a fixed $s$, there are at most three positions $z$
with more than one choice for $\chi(z)$: position $0$ and at most two other
positions, since $|I_s|\leq3$.
At each of these positions there are at most $e$ choices by \eqref{eq:transversals}, which implies
$|\mathcal E_s|\leq e^3$. Similarly, for a bridge $(\chi,t)$ there are at most
$|I_t|\leq3$ positions with more than one choice.
Hence $|\mathcal B_t|\leq e^3$. For the shift $t$, there are
$r-3$ choices.

Generating sets for all required subgroups and the required transversals
are polynomial-time computable by \Cref{lem:fast}. Testing the conditions in \eqref{eq:allowed-representatives} is
polynomial time. The overlap sets $I_t$ can be computed by going over all $t \in \Z_r \setminus \{0\}$ 
and all $s \in S$.
\end{proof}

\begin{lemma}\label{lem:generates-W}
We have $W = \langle \Delta \rangle$.
\end{lemma}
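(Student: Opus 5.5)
We show that $\langle\Delta\rangle$ contains every element $(\delta_{x,g},0)$ with $x\in\Z_r$ and $g\in H$, and also the pure shift $(\one,1)$. This suffices, because every $(\zeta,t)\in W$ factors as $(\zeta,0)(\one,t)$, and $(\zeta,0)$ is the product of the commuting elements $(\delta_{x,\zeta(x)},0)$ over $x\in\Z_r$.

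\emph{Pure shifts.} The exits $X_s=(\one,-s)$ for $s\in S\setminus\{0\}$ lie in $\Delta$. By \Cref{lem:positions}(iii) the elements $s\in S\setminus\{0\}$ generate $\Z_r$, so the exits generate the subgroup $\{(\one,t):t\in\Z_r\}$ of all pure shifts. In particular $(\one,1)\in\langle\Delta\rangle$.

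\emph{Coordinate elements.} Conjugating by a pure shift moves a coordinate element to another position. By \eqref{eq:wreath-multiplication},
\[
(\one,-t)(\delta_{x,g},0)(\one,t)=(\theta_{-t}(\delta_{x,g}),0)=(\delta_{x+t,g},0).
\]
Hence it is enough to produce $(\delta_{x,g},0)$ for all $g\in H$ at a single position $x$. Choose any $x\notin S$; such a position exists since $|S|=e+2<r$. The fillers $(\delta_{x,g},0)$ with $g\in T(H)=C_x$ lie in $\mathcal F\subseteq\Delta$. By \Cref{lem:fast}(ii), $T(H)$ generates $H$, so products of these fillers give $(\delta_{x,g},0)$ for every $g\in H$. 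Conjugating by pure shifts then yields $(\delta_{y,g},0)$ for every position $y\in\Z_r$ and every $g\in H$.

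The only step that needs care is confirming that a position $x\notin S$ exists and that its filler alphabet is the full set $T(H)$. Both follow directly from \eqref{eq:coordinate-alphabets} and \Cref{lem:positions}(i),(iv). Entries and bridges are not needed for generation.
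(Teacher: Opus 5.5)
Your proof is correct and follows the paper's argument essentially step for step: the exits give all pure shifts by \Cref{lem:positions}(iii), the fillers at a position $x\notin S$ (where $C_x=T(H)$) give a full copy of $H$ at $x$, and conjugation by pure shifts moves this copy to every position. One small quibble: a position $x\notin S$ exists because $r=10s_e+5>e+2=|S|$, which follows from the definition of $r$ together with \Cref{lem:positions}(i); part (iv) only bounds $r$ from above and is not needed for this.
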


\begin{proof}
The exits $X_s$ from \eqref{eq:entries-exits} generate all pure shifts
$(\one,t)$, since their shifts
$-s$ generate $\Z_r$ by \Cref{lem:positions}(iii).
There is a position $x_0\in \Z_r \setminus S$, because $r>|S|$.
By \eqref{eq:coordinate-alphabets}--\eqref{eq:filler-alphabet}, the
fillers supported at $x_0$ generate
$\{(\delta_{x_0,g},0):g\in H\} \cong H$. Conjugation by pure shifts yields
a copy of $H$ at every position $x \in \Z_r$. More explicitly, using
\eqref{eq:wreath-multiplication} and \eqref{eq:coordinate-function},
\[
 (\one,t)(\delta_{x_0,g},0)(\one,-t)
   =(\delta_{x_0-t,g},0).
\]
These $r$ copies of $H$ generate $H^{\Z_r}$, and together with the
pure shifts they generate the group $W$. 
\end{proof}

\subsection{Short words for the elements of \textit{W}}
\label{sec:coverage}

In this section we prove the important property of the generating set $\Delta$
mentioned in the first paragraph of \Cref{sec:wreath} (\Cref{prop:coverage} below).
Recall that $r\cdot e+k+3\leq m$ by \Cref{lem:positions}(iv).

\begin{proposition}\label{prop:coverage}
Assume $\ell(g_0,\Sigma)\leq k$. For every $(\zeta,t)\in W$, there is a
word $w\in\Delta^*$ with value $(\zeta,t)$ and 
$|w| \le r\cdot e+k+3\leq m$ 
 such that one of the following holds:
\begin{enumerate}[label=\textup{(\roman*)}]
\item $w\in\mathcal F^*\mathcal E\mathcal F^*\mathcal X\mathcal F^*$,
\item $w\in\mathcal F^*\mathcal B\mathcal F^*$.
\end{enumerate}
\end{proposition}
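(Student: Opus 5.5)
The plan is to split according to the shift $t$. Generic shifts are handled by a single bridge. The three shifts $t\in\{0,\tau,-\tau\}$ have no bridges, so there we use an entry followed by an exit. In every case the bulk of the coordinates is filled in by \Cref{lem:coordinate-completion}. A preliminary observation: the proof of \Cref{lem:coordinate-completion} only ever uses fillers at positions $x\notin S$ or $x+t\notin S$ (from $T(H)$) and at positions in $S'$ (from $T(J_s)$). It never touches positions $0$ or $\tau$, so it applies for any set $E$ satisfying its hypothesis.

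\emph{Case $t\notin\{0,\tau,-\tau\}$.} Take $E=\Z_r$. By \Cref{lem:positions}(v)(a), every $x\in I_t$ satisfies $\{x,x+t\}\cap S'\neq\emptyset$, so \Cref{lem:coordinate-completion} applies. It gives $(\zeta,t)=(\eta,0)(\chi,t)(\xi,0)$ with $\chi(x)\in V_{t,x}$ on $I_t$ and $\chi=\id$ elsewhere. By \eqref{eq:bridge-definition} this says exactly that $(\chi,t)\in\mathcal B_t$. The resulting word lies in $\mathcal F^*\mathcal B\mathcal F^*$ and has length at most $re+1$.

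\emph{Case $t\in\{0,\tau,-\tau\}$.} Write $t=s-s'$ with $s,s'\in S\setminus\{0\}$, and look for a word
\[
U\,(\chi,s)\,V\,X_{s'}\,U',
\]
where $U,V,U'$ are filler words. By \eqref{eq:wreath-multiplication} its value is $(\eta\,\theta_0(\chi)\,\theta_s(\xi)\,\theta_{t}(\eta'),t)$. Hence at position $x$ the value is
\[
\eta(x)\,\chi(x)\,\xi(x+s)\,\eta'(x+t).
\]
At positions $x\neq0$ we take $U'$ empty and apply \Cref{lem:coordinate-completion} with shift $s$ and $E=\Z_r\setminus\{0\}$. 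This is legitimate by \Cref{lem:positions}(v)(b), which gives $\{x,x+s\}\cap S'\neq\emptyset$ for $x\in I_s\setminus\{0\}$. The resulting $\chi$ satisfies \eqref{eq:entry-other} and \eqref{eq:entry-outside}, and $|U|+|V|\leq e(r-1)$.

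Position $0$ is the crux. Here $\eta(0)=\id$ is forced because $C_0=\{\id\}$, so the value at $0$ is $\chi(0)\,\xi(s)\,\eta'(t)$. The fillers at $s$ contributing to $\xi(s)$ are also used by the completion at position $x=0$, and this interaction must be checked carefully. We treat the subcases as follows.
\begin{itemize}
\item For $t=0$ and $\zeta(0)\neq h$, choose $s=s'\in S'$ with $\zeta(0)\notin hJ_s$. Such an $s$ exists because $\bigcap_i K_i=\{\id\}$ (the action is faithful), so $\bigcap_s hJ_s=\{h\}$. Then \eqref{eq:allowed-cover} gives $\chi(0)\in D_s$ and $\xi(s)\in J_s$, the latter written with at most $e$ fillers from $T(J_s)$.
\item For $t=0$ and $\zeta(0)=h$, use $s=s'=\tau$, so $\chi(0)=\id$. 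Fill position $\tau$ with a word of length $k+1$ over $C_\tau=\Gamma$ representing $h$; this exists by \eqref{eq:short-h}, and it is the only place where the hypothesis $\ell(g_0,\Sigma)\leq k$ is used.
\item For $t=\pm\tau$, take $(s,s')=(b,a)$ or $(a,b)$, so that $J_s=K$ by \eqref{eq:J-ab}. If $\zeta(0)\notin hK$, proceed as in the first subcase. If $\zeta(0)\in hK$, let $U'$ put $h$ at position $t=\pm\tau$. For $t=\tau$ this costs $k+1$ fillers from $\Gamma$. Since $h$ is central, we then need $\chi(0)\xi(s)=\zeta(0)h^{-1}\in K\subseteq H\setminus hK=D_sK$, using $h\notin K$.
\end{itemize}
For $t=-\tau$ the position $-\tau$ is not $\tau$, and it does not lie in $S$ by \Cref{lem:positions}(i). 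So $C_{-\tau}=T(H)$, which is even easier; alternatively one may symmetrically use $\tau$ there.

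The main obstacle is this bookkeeping at position $0$ together with the length budget. In every subcase we must verify that the extra fillers at $s$ (at most $e$) or at $\pm\tau$ (at most $k+1$) do not disturb coordinates already fixed by the completion, or else are absorbed into it. We must also check that the total length stays within $e(r-1)+e+(k+1)+2\leq re+k+3$, which is at most $m$ by \Cref{lem:positions}(iv). The resulting word has the shape $\mathcal F^*\mathcal E\mathcal F^*\mathcal X\mathcal F^*$.
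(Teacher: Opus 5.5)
Your proof follows the paper's structure. It uses the same split on $t$, a bridge from \Cref{lem:coordinate-completion} with $E=\Z_r$ for $t\notin\{0,\tau,-\tau\}$, and otherwise an entry/exit pair built from the completion with $E=\Z_r\setminus\{0\}$ and shift $s$, with coordinate $0$ fixed by hand. Your two subcases for $t=0$ are exactly the paper's.

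The interaction you leave open follows at once from the third bullet of \Cref{lem:coordinate-completion}. Since $0\notin E$, all completion fillers of $V$ lie in $E+s=\Z_r\setminus\{s\}$, so nothing at position $s$ is ``also used by the completion.'' A filler at $s$ inserted before the exit, or at $t$ inserted after it, changes only coordinate $0$.

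The genuine difference is the case $t=\tau$ with $\zeta(0)\in hK$.
\begin{itemize}
\item The paper appends one filler $(\delta_{\tau,\gamma},0)$ with $\gamma\in\Gamma\setminus K$ after $X_a$, which pushes the required value at $0$ out of $hK$. This costs one letter (total $re+3$) and does not use $\ell(g_0,\Sigma)\le k$.
\item You append the whole $\Gamma$-word for $h$ and use centrality to reduce to $\zeta(0)h^{-1}\in K\subseteq D_bK$. This is valid and lands exactly on $re+k+3$. However, it uses the hypothesis a second time, so your remark that $t=0$, $\zeta(0)=h$ is the only place it is used is wrong.
\end{itemize}

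For $t=-\tau$ your sketch needs tightening. If you literally put $h$ at $-\tau$ (up to $e$ fillers from $T(H)$) and $\zeta(0)h^{-1}$ at $a$ (up to $e$ more), the length can reach $(r+1)e+2$. That exceeds the stated bound $re+k+3$ whenever $e>k+1$, although it stays below $m$. Do as the paper does: take $\chi(0)=\id\in D_a$ and write $\zeta(0)$ itself at position $-\tau\notin S$ with at most $e$ fillers from $C_{-\tau}=T(H)$. This needs no case distinction.

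Finally, the alternative of ``using $\tau$'' does not work for $t=-\tau$. After the exit the total shift is $-\tau$, so a filler at $\tau$ acts on coordinate $2\tau$, not $0$.
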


\begin{proof}
For $\Gamma$ and $h$ from \eqref{eq:B}--\eqref{eq:h}, the bound
\eqref{eq:short-h} provides a $\Gamma$-word for $h$ of length at most $k+1$.

We first give a construction that is used in three of the four cases below.
Fix $s\in S\setminus\{0\}$ and a permitted value $g$ for an entry at
position $0$. By  \eqref{eq:entry-zero-hard} and \eqref{eq:entry-zero-safe}
this means
 $g=\id$ if $s=\tau$ and $g\in D_s$ if $s\in S'$.
Apply \Cref{lem:coordinate-completion} with $E=\Z_r\setminus\{0\}$,
shift $s$ (for the $t$ in \Cref{lem:coordinate-completion}), and the function $\zeta_0$ with $\zeta_0(0) = \id$
and  $\zeta_0(x) = \zeta(x)$ for all $x \in \Z_r\setminus \{0\}$.
The assumptions of \Cref{lem:coordinate-completion} hold by
\Cref{lem:positions}(v). \Cref{lem:coordinate-completion} gives filler words $U$ and $V$ with $|U|+|V| \leq (r-1) \cdot e$ and a function 
$\chi : \Z_r \to H$ with the following additional properties:
\begin{itemize}
\item $U (\chi, s) V$ evaluates to $(\zeta_0,s)$,
\item $\chi(x) \in V_{s,x}$ for $x \in I_s \setminus \{0\}$ and $\chi(x) = \id$ for all other $x$,
\item the filler word $U$ acts trivially at position zero and $V$ acts trivially at position $s$.
\end{itemize}
We then modify $\chi(0) = \id$ to $\chi(0)=g$. By  \eqref{eq:entry-zero-hard}--\eqref{eq:entry-outside} this
implies that with the new $E=(\chi,s)$ we have $E \in \mathcal E$.
Moreover, $UEV$ has value $(\zeta',s)$, where
\begin{equation}\label{eq:entry-completion}
 \zeta'(0)=g,\qquad
 \zeta'(x)=\zeta_0(x)=\zeta(x) \quad (x\in \Z_r\setminus \{0\}).
\end{equation}
In particular, a filler $(\delta_{s,g'},0)$ appended to $UEV$
sets the value at position $0$ to $g g'$ and changes nothing else.

We now distinguish four cases according to $t$.

\medskip\noindent
\emph{Case 1: $t=0$.}
First suppose that $\zeta(0)\neq h$. Since $h^{-1}\zeta(0)\neq\id$,
there is a point $i\in[1,e]$ moved by $h^{-1}\zeta(0)$.
For $K_i$ from \eqref{eq:stabilizers}, this means
$\zeta(0)\notin hK_i$. Set $s=s_i\in S'$, so that $J_s=K_i$
by \eqref{eq:J-s}. Equation~\eqref{eq:allowed-cover} gives a factorization
\begin{equation}\label{eq:zero-factorization}
 \zeta(0)=g j_0,\qquad g\in D_s,\quad j_0\in J_s.
\end{equation}
Use the construction above with this $s$ and $g$, and let $V'$ be a
word of at most $e$ fillers $(\delta_{s,j},0)$ with $j \in T(J_s) = C_s$ (see \eqref{eq:coordinate-alphabets}) that produces
$(\delta_{s,j_0},0)$. Then
\begin{equation}\label{eq:word-zero}
 w=U\,E\,V\,V'\,X_s
\end{equation}
has value $(\zeta,0)$: the filler word $V'$ changes the value at $0$ from $g$
to $g j_0=\zeta(0)$, and the exit $X_s$ changes only the shift to $0$.
The word $w$ has the form (i) from the proposition and $|w| \le (r-1)e+e+2=r \cdot e+2$.

If $\zeta(0)=h$, use the same construction with $s=\tau$ and $g=\id$.
Since $C_\tau=\Gamma$ by \eqref{eq:coordinate-alphabets}, a word $V'$
of at most $k+1$ fillers $(\delta_{\tau, a},0)$ with $a \in \Gamma$
 produces $(\delta_{\tau, h},0)$.
Thus the word $w=U\,E\,V\,V'\,X_\tau$ of the form (i) has the value $(\zeta,0)$ and length at most
$(r-1)e+k+3$. 

\medskip\noindent
\emph{Case 2: $t\notin\{0,\tau,-\tau\}$.}
Apply \Cref{lem:coordinate-completion} with $E=\Z_r$, shift $t$,
and the function $\zeta$. The assumptions of \Cref{lem:coordinate-completion}  hold by
\Cref{lem:positions}(v). The element $B=(\chi,t)$ supplied by \Cref{lem:coordinate-completion} 
satisfies \eqref{eq:bridge-definition}, so it is a bridge.
The resulting word
\begin{equation}\label{eq:word-bridge}
 w=U\,B\,V,\qquad U,V\in\mathcal F^*,
\end{equation}
has value $(\zeta,t)$ 
and length at most $r \cdot e+1$. Moreover, it has the form in (ii).

\medskip\noindent
\emph{Case 3: $t=\tau$.}
Recall that $b-a=\tau$ by \eqref{eq:special-positions} and
$J_a=J_b=K$ by \eqref{eq:J-s}. Choose
\begin{equation}\label{eq:a-star}
 \gamma\in\Gamma\setminus K.
\end{equation}
Such an element exists because $\langle\Gamma\rangle=H$ and $K<H$.
Put
\begin{equation}\label{eq:z-star}
 \mu=
 \begin{cases}
  \id&\text{if }\zeta(0)\notin hK,\\
  \gamma&\text{if }\zeta(0)\in hK.
 \end{cases}
\end{equation}
Then $\zeta(0)\mu^{-1}\notin hK = hJ_b$. Indeed, if $\zeta(0)=h \kappa$ with
$\kappa\in K$ and $\zeta(0)\gamma^{-1}\in hK$, then $\kappa \gamma^{-1}\in K$,
contradicting $\gamma\notin K$. By \eqref{eq:allowed-cover}, write
\begin{equation}\label{eq:zero-plus}
 \zeta(0)\mu^{-1}=g j_0,\qquad g\in D_b,\quad j_0\in J_b = K.
\end{equation}
Use the construction from the beginning of the proof with $s=b$ and this $g$.
Let $V'$ be a word of at most $e$ fillers $(\delta_{b,j},0)$ with $j \in T(J_b) = C_b$ that
produces $j_0$. Let $Y$ be empty if $\mu=\id$, and let
$Y=(\delta_{\tau,\gamma},0)$ otherwise. This is a filler because
$\gamma\in\Gamma=C_\tau$ by \eqref{eq:coordinate-alphabets}. The word
\begin{equation}\label{eq:word-plus}
 w=U\,E\,V\,V'\,X_a\,Y
\end{equation}
has shift $b-a=\tau$. Before the exit $X_a$ its value at $0$ is $g j_0 =  \zeta(0)\mu^{-1}$;
after the exit, $Y$ multiplies this value by $\mu$, giving $\zeta(0)$.
The words $V'$ and $Y$ affect no other position. Thus $w$ has value
$(\zeta,\tau)$, has the form in (i), and satisfies
$|w| \le (r-1)e+e+3=r \cdot e+3$.

\medskip\noindent
\emph{Case 4: $t=-\tau$.}
Use the construction from the beginning of the proof with $s=a$ and $g=\id$.
This is permitted because $\id\in D_a$ by
\eqref{eq:allowed-representatives} and \eqref{eq:H-properties}.
Since $-\tau\notin S$ by \Cref{lem:positions}(i), we have
$C_{-\tau}=T(H)$ by \eqref{eq:coordinate-alphabets}. Let $V'$ be a word of at most $e$ fillers 
$(\delta_{-\tau,g},0)$ with $g \in T(H)$ that produces $\zeta(0)$. Then
\begin{equation}\label{eq:word-minus}
 w=U\,E\,V\,X_b\,V'
\end{equation}
has shift $a-b=-\tau$. The prefix $UEVX_b$ has value $\id$ at
position $0$ and the required values $\zeta(x)$ at all other positions $x \neq 0$.
At shift $-\tau$, the word $V'$ changes only the value at $0$,
giving $\zeta(0)$. Thus $w$ has value $(\zeta,-\tau)$, has the form
in (i), and satisfies $|w| \le (r-1)e+e+2=r \cdot e+2$.

In all cases, we have $|w| \le r \cdot e+k+3$, proving the proposition.
\end{proof}

\subsection{The final cyclic coordinate}
\label{sec:final-clock}

In this section we construct the final group $\widehat{W}$ of our reduction, which is obtained by 
taking the direct product of the wreath product $W$ and a cyclic group of order $q=2m-1$.
Moreover, we extend the generating set $\Delta$ of $W$ to a generating set $\widehat{\Delta}$
of $\widehat{W}$. Then $(\widehat{\Delta}, m)$ is the output instance of our reduction from
$\BLength$ to $\BDiameter$. \Cref{prop:yes} and \Cref{prop:no} together state the correctness of the reduction.
\Cref{alg:reduction} at the end of the section recaps all steps of the reduction.

Recall that the permutation $\alpha$ has order $q^2$ and degree $\nu$.
Let $\beta = \alpha^q$, which has order $q=2m-1$ and degree $\nu$.
By adding to the domain of the permutation group $W$ (which has size $r \cdot e$)
a disjoint domain of size $\nu$, on which $\beta$ acts, we 
obtain the permutation group
\begin{equation}\label{eq:output-group}
 \widehat W=W\times\langle \beta\rangle\leq\Sym(r \cdot e + \nu).
\end{equation}
Define the subset $\widehat{\Delta} \subseteq \widehat{W}$ consisting of the
following so-called \emph{clock variants} of the generators from $\Delta =\mathcal F\cup\mathcal E\cup\mathcal X\cup\mathcal B$:
\begin{equation}\label{eq:clocked-generators}
 \begin{aligned}
  &(F,\beta^j)&&\text{for }F\in\mathcal F,\ j\in\{0,1,2\},\\
  &(E,\beta^j)&&\text{for }E\in\mathcal E,\ j\in\{0,1\},\\
  &(X,\beta^j)&&\text{for }X\in\mathcal X,\ j\in\{0,1\},\\
  &(B,\id)&&\text{for }B\in\mathcal B.
 \end{aligned}
\end{equation}
Here $\mathcal F$ is defined in \eqref{eq:filler-alphabet},
$\mathcal E$ and $\mathcal X$ in \eqref{eq:entries-exits}, and
$\mathcal B$ in \eqref{eq:unclocked-alphabet}.

\begin{lemma}\label{lem:generates-output}
We have  $\widehat W = \langle \widehat{\Delta} \rangle$. 
\end{lemma}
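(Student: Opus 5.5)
Since $\widehat W=W\times\langle\beta\rangle$ is a direct product, it suffices to show that $\langle\widehat\Delta\rangle$ contains $W\times\{\id\}$ and $\{\id\}\times\langle\beta\rangle$. These two subgroups together generate $\widehat W$.

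First, $\widehat\Delta$ contains both $(F,\id)$ and $(F,\beta)$ for every filler $F\in\mathcal F$. In particular, taking $F=(\one,0)=(\delta_{0,\id},0)\in\mathcal F$, we get $((\one,0),\beta)\in\widehat\Delta$. This is exactly the element $(\id_W,\beta)$, so $\{\id\}\times\langle\beta\rangle\le\langle\widehat\Delta\rangle$.

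Second, $\widehat\Delta$ contains the clock variants with exponent $j=0$ of all fillers, entries and exits, namely $(F,\id)$, $(E,\id)$ and $(X,\id)$. It also contains $(B,\id)$ for every bridge $B$. Hence $\Delta\times\{\id\}\subseteq\widehat\Delta$. By \Cref{lem:generates-W}, $\langle\Delta\rangle=W$, so $W\times\{\id\}\le\langle\widehat\Delta\rangle$.

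Combining the two inclusions gives $\widehat W=\langle\widehat\Delta\rangle$. The proof needs only two facts: the identity filler lies in $\mathcal F$ (noted after \eqref{eq:filler-alphabet}), and $\Delta$ generates $W$ (\Cref{lem:generates-W}). I do not expect any step to be a real obstacle.
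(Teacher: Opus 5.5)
Your proposal is correct and matches the paper's proof: both show that $\Delta\times\{\id\}\subseteq\widehat\Delta$, which generates $W\times\{\id\}$ by \Cref{lem:generates-W}, and that the clock variant $((\one,0),\beta)$ of the identity filler generates $\{(\one,0)\}\times\langle\beta\rangle$.
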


\begin{proof}
The set $\widehat{\Delta}$ contains all $(g, \id)$ with $g \in \Delta$.
 By \Cref{lem:generates-W}, these elements generate $W\times\{\id\}$. Moreover
 the identity element $(\one,0)$ of $W$ belongs to $\mathcal F \subseteq \Delta$ and
 hence $((\one,0),\beta) \in \widehat{\Delta}$, which generates $\{ (\one,0) \} \times \langle \beta \rangle$.
\end{proof}
The set $\widehat{\Delta}$ is the generating set produced by our reduction from
$\BLength$ to $\BDiameter$, whereas the threshold number ($k$ in  \Cref{def:problems}) is 
$m$ from \eqref{eq:final-clock-parameters}, written in binary. Given the set $\Delta$, one can easily
produce one-line notations for all permutations in $\widehat{\Delta}$.

\begin{proposition}\label{prop:yes}
If $\ell(g_0,\Sigma)\leq k$, then
$\diam(\widehat W,\widehat{\Delta})\leq m$.
\end{proposition}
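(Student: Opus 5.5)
Take an arbitrary element $(x,\beta^c)\in\widehat W$ with $x=(\zeta,t)\in W$ and $c\in[0,q-1]$. The plan is to start from the word $w\in\Delta^*$ for $x$ supplied by \Cref{prop:coverage}, which has length at most $r\cdot e+k+3\leq m$ and one of the shapes (i) or (ii), and then lift each letter of $w$ to one of its clock variants in \eqref{eq:clocked-generators}. The exponents are chosen so that the clock components add up to $c$ modulo $q$. Padding with the identity filler $(\one,0)\in\mathcal F$ is free in $W$, and its clock variants $((\one,0),\beta^j)$ with $j\in\{0,1,2\}$ let us adjust the clock without changing the $W$-component. We may therefore pad $w$ with fillers to length exactly $m$ in a way that preserves shape (i) or (ii), since fillers may be inserted anywhere in the $\mathcal F^*$ blocks.

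The key step is counting which clock sums are reachable. For shape (i), a word of length exactly $m$ has $m-2$ fillers, one entry and one exit. Fillers contribute any sum in $[0,2(m-2)]$ and the entry and exit each contribute $0$ or $1$, so the total ranges over all of $[0,2(m-2)+2]=[0,2m-2]=[0,q-1]$. Every residue $c$ modulo $q$ is thus achieved by choosing each letter's exponent greedily, which is just writing $c$ as a sum of bounded integers. For shape (ii), a word of length $m$ has $m-1$ fillers and one bridge with clock fixed to $\id$. The fillers then give the range $[0,2(m-1)]=[0,2m-2]=[0,q-1]$, which again covers all residues.

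Since the $W$-component of the lifted word is the value of the padded $\Delta$-word, namely $x$, and the clock component is $\beta^c$, every element of $\widehat W$ has length at most $m$ with respect to $\widehat{\Delta}$. The main point to verify is the arithmetic showing that exactly $q=2m-1$ residues are reached. This explains the asymmetric choice of exponent sets $\{0,1,2\}$, $\{0,1\}$ and $\{0\}$ in \eqref{eq:clocked-generators}, and it relies on \Cref{prop:coverage} supplying words of length at most $m$ rather than merely short ones. One should also note that $m\geq 2$, so that both shapes admit padding to length $m$; this holds trivially by \eqref{eq:parameter-properties}.
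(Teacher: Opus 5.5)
Your proposal is correct and follows essentially the same route as the paper: take the word from \Cref{prop:coverage}, pad it with identity fillers to length exactly $m$ while keeping shape (i) or (ii), and choose clock variants so that the $\beta$-exponents hit any prescribed value in $[0,q-1]$. The paper uses the same counts, $2(m-2)+1+1=q-1$ for shape (i) and $2(m-1)=q-1$ for shape (ii), together with the observation that smaller exponents are always available.
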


\begin{proof}
Consider an arbitrary $(g,\beta^i)\in\widehat W$ from \eqref{eq:output-group}, with
$i\in[0,q-1]$.
By \Cref{prop:coverage}, $g$ has a word $w$ of length at most $m$
with a factorization as in (i) or (ii) from \Cref{prop:coverage}.
Pad $w$ with identity fillers $(\one,0) \in \mathcal{F}$ to
length exactly $m$, preserving the factorization from (i), respectively (ii).
Replace in $w$ every occurrence of a symbol from $\Delta$ by one of its clock variants
from \eqref{eq:clocked-generators}. We have to show that this can be done in such a 
way that the second component in $\widehat{W}$ becomes $\beta^i$.

If $w$ has the form (i), the two non-fillers are an entry and an exit.
Hence, one can realize 
\begin{equation}\label{eq:clock-capacity-two}
 2(m-2)+1+1=2m-2=q-1
\end{equation}
as the sum of exponents of $\beta$.

If $w$ has the form (ii), then there is, apart from $m-1$ fillers, a single bridge,
and one can realize 
\begin{equation}\label{eq:clock-capacity-one}
 2(m-1)=q-1 
\end{equation}
as the sum of exponents of $\beta$.

For every letter $(Z, \beta^z) \in \widehat{\Delta}$, also every letter 
$(Z, \beta^{y})$ with $0 \leq y \le z$ belongs to $\widehat{\Delta}$.
Therefore every integer in $[0,q-1]$ is a realizable sum for the exponent of $\beta$
in our situation.
Choosing sum $i$ gives a word of length $m$ for $(g,\beta^i)$.
This proves $\diam(\widehat W,\widehat{\Delta})\leq m$.
\end{proof}

Let $\mathbf h:\Z_r\to H$ be the constant function with value $h$
from \eqref{eq:h}.
Consider the element
\begin{equation}\label{eq:obstruction}
 \Lambda=((\mathbf h,0),\beta^{q-1})\in\widehat W.
\end{equation}

\begin{lemma}\label{lem:central-obstruction}
The element $\Lambda$ is central in $\widehat W$.
Hence, every cyclic rotation of a
word with value $\Lambda$ also has value $\Lambda$.
\end{lemma}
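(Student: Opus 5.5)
Since $\widehat W=W\times\langle\beta\rangle$ is a direct product and $\langle\beta\rangle$ is abelian, the second component $\beta^{q-1}$ commutes with everything. It therefore suffices to show that $(\mathbf h,0)$ is central in $W$. Because $\widehat\Delta$ generates $\widehat W$ by \Cref{lem:generates-output}, we could restrict to checking commutation with generators. It is just as easy, however, to verify directly against an arbitrary element $(\zeta,t)\in W$ using the multiplication rule \eqref{eq:wreath-multiplication}.

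The computation runs as follows. On one side, $(\mathbf h,0)(\zeta,t)=(\mathbf h\,\theta_0(\zeta),t)$, whose value at position $x$ is $h\zeta(x)$. On the other side, $(\zeta,t)(\mathbf h,0)=(\zeta\,\theta_t(\mathbf h),t)$, whose value at $x$ is $\zeta(x)\mathbf h(x+t)=\zeta(x)h$, because $\mathbf h$ is constant. By \Cref{lem:central-gap}, $h\in Z(H)$, so $h\zeta(x)=\zeta(x)h$ for every $x$. Hence the two products coincide, and $(\mathbf h,0)\in Z(W)$. Combining this with the trivially central factor $\beta^{q-1}$ shows that $\Lambda\in Z(\widehat W)$.

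For the second claim, let $w=w_1w_2$ be any factorization of a word with value $\Lambda$, and write $u_1,u_2$ for the values of $w_1,w_2$. From $u_1u_2=\Lambda$ we get $u_2=u_1^{-1}\Lambda$. Then the rotated word $w_2w_1$ has value
\[
u_2u_1=u_1^{-1}\Lambda u_1=\Lambda,
\]
where the last step uses centrality of $\Lambda$. Every cyclic rotation of $w$ is of the form $w_2w_1$ for some such factorization, so every rotation also has value $\Lambda$.

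There is no real obstacle here. The only points needing care are that the shift by $\theta_t$ leaves a constant function unchanged, and that the centrality of $h$ in $H$ has already been established.
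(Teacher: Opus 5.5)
Your proposal is correct and follows essentially the same route as the paper: you check directly from the multiplication rule that $(\mathbf h,0)$ is central in $W$, using $h\in Z(H)$ and the invariance $\theta_t(\mathbf h)=\mathbf h$, and you note that the $\langle\beta\rangle$ factor is central. You then treat cyclic rotation as conjugation, $u_2u_1=u_1^{-1}(u_1u_2)u_1$, exactly as the paper does.
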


\begin{proof}
The function $\mathbf h \in H^{\Z_r}$ is central in the group $H^{\Z_r}$
since $h$ is central in $H$. Moreover, since 
$\theta_t(\mathbf h)=\mathbf h$ for every $t$, $(\mathbf h,0)$ is central in $W$ 
by \eqref{eq:wreath-multiplication}.
The factor $\langle \beta \rangle$ of $\widehat W$ in \eqref{eq:output-group} is 
contained in $Z(\widehat W)$. This implies that 
$\Lambda$ is central in $\widehat W$.

For the second assertion, note that cyclic rotation of words corresponds to conjugation on the group level:
$vu = u^{-1}(uv)u$. Since $\Lambda \in Z(\widehat W)$, it  is invariant under conjugation, which means that every
cyclic rotation of a word with value $\Lambda$ has value $\Lambda$ as well.
\end{proof}

If our initial instance of $\BLength$ is negative (i.e., $\ell(g_0,\Sigma)>k$) then it turns out 
that the $\langle \beta \rangle$-component restricts the number of
non-fillers in a $\widehat{\Delta}$-word of length at most $m$ for $\Lambda$ to two. 
We will then argue that such a word cannot exist, which implies $\ell(\Lambda,\widehat{\Delta}) > m$ and hence
$\diam(\widehat W,\widehat{\Delta})>m$.

\begin{proposition}\label{prop:no}
If $\ell(g_0,\Sigma)>k$, then $\ell(\Lambda,\widehat{\Delta})>m$ and hence
$\diam(\widehat W,\widehat{\Delta})>m$.
\end{proposition}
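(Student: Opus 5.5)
The plan is to take a shortest word $w\in\widehat\Delta^*$ with value $\Lambda$, assume $|w|\leq m$, and derive a contradiction. Together, the $\langle\beta\rangle$-component and the shift component will force $w$ into a very rigid shape. By \Cref{lem:central-obstruction} I may then rotate $w$ cyclically, so that only one short computation at position $0$ is left.

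\emph{Step 1 (clock counting).} Let $\sigma$ be the sum of the $\beta$-exponents of the letters of $w$. Then $\sigma\equiv q-1\pmod q$. By \eqref{eq:clocked-generators}, fillers contribute at most $2$, entries and exits at most $1$, and bridges $0$. Hence $\sigma\leq 2|w|-f$, where $f$ counts entries and exits, each counted once, and bridges, each counted twice. Since $0\leq\sigma\leq 2m=q+1<2q-1$, we must have $\sigma=q-1=2m-2$. This leaves only the following possibilities:
\begin{itemize}
\item $|w|=m-1$ and all letters are fillers;
\item $|w|=m$ and $w$ has exactly one bridge and otherwise fillers;
\item $|w|=m$ and $w$ has at most two letters from $\mathcal E\cup\mathcal X$ and no bridges.
\end{itemize}

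\emph{Step 2 (shift counting).} The $W$-component of $\Lambda$ has shift $0$, while fillers have shift $0$ and every other generator has nonzero shift.
\begin{itemize}
\item A single bridge, a single entry, or a single exit is therefore impossible.
\item Two entries $s,s'\in S\setminus\{0\}$ would need $s+s'\equiv 0\pmod r$. This is impossible, since $0<s,s'<r/2$ by \Cref{lem:positions}(i). The same argument rules out two exits.
\item In the remaining two-letter case we have one entry at $s$ and the exit $X_{s'}$ with $s=s'$.
\end{itemize}
In the all-filler case the value at position $0$ is a product of elements of $C_0=\{\id\}$, so it equals $\id\neq h$. This uses $h\neq\id$ from \Cref{lem:central-gap}, and it gives a contradiction.

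\emph{Step 3 (the entry/exit case).} By \Cref{lem:central-obstruction} I rotate $w$ to the form $E\,U\,X_s\,V$, where $E=(\chi,s)\in\mathcal E_s$ and $U,V$ are filler words with values $(\eta,0),(\xi,0)$ and $|U|+|V|=m-2$. By \eqref{eq:wreath-multiplication} the value is $(\chi\,\theta_s(\eta)\,\xi,0)$. Evaluating at position $0$ gives $h=\chi(0)\eta(s)\xi(0)$, and $\xi(0)=\id$ because $C_0=\{\id\}$. Fillers at different positions commute, so $\eta(s)$ is the ordered product of the fillers of $U$ located at $s$.
\begin{itemize}
\item If $s\in S'$, then $\chi(0)\in D_s$ and $\eta(s)\in\langle T(J_s)\rangle=J_s$. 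Hence $h\in\chi(0)J_s$, contradicting \eqref{eq:allowed-representatives}.
\item If $s=\tau$, then $\chi(0)=\id$ and $\eta(s)$ is a product of at most $m-2$ elements of $C_\tau=\Gamma$. This gives $\ell(h,\Gamma)\leq m-2$, contradicting \eqref{eq:long-h}.
\end{itemize}
Thus $\ell(\Lambda,\widehat\Delta)>m$, and therefore $\diam(\widehat W,\widehat\Delta)>m$.

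I expect the main obstacle to be the bookkeeping in Step 1. There I must get the case list exactly right, in particular excluding length $\leq m-2$ and the two-bridge and bridge-plus-entry cases, each of which makes the maximum achievable $\sigma$ too small. I must also check that the exponent sum cannot wrap around to $2q-1$. The rotation argument and the position-$0$ computation should then be routine.
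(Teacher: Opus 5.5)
Your proof is correct and follows essentially the same route as the paper. The clock count leaves at most two non-fillers, and if there are two they are an entry and an exit. The shift argument then forces them to be an element of $\mathcal E_s$ and the exit $X_s$ for the same $s$. After rotating via \Cref{lem:central-obstruction}, the value at position $0$ either lies in $D_sJ_s$ or gives a $\Gamma$-word for $h$ of length at most $m-2$.

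The differences are cosmetic. The paper pads the word to length exactly $m$ with $((\one,0),\id)$, so the identity $\sum_i(2-j_i)=2$ covers all lengths at once. It also classifies non-filler letters by exponent: $j_i=1$ forces $Y_i\in\mathcal E\cup\mathcal X$. This sidesteps the small ambiguity in your count $f$ that comes from $\mathcal E\cap\mathcal B\neq\emptyset$.
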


\begin{proof}
By \eqref{eq:long-h}, $\ell(h,\Gamma)>m$, for $\Gamma$ and $h$ from \eqref{eq:B} and \eqref{eq:h}.
Suppose, towards a contradiction, that a word over $\widehat{\Delta}$
of length at most $m$ has value $\Lambda$.
Pad it to length exactly $m$ with the identity generator
$((\one,0),\id)$, which belongs to $\widehat{\Delta}$ since $(\one,0) \in \Delta$.
Write the resulting word as
$w=(Y_1,\beta^{j_1})\cdots(Y_m,\beta^{j_m})$, with $j_i\in[0,2]$.
Then
\begin{equation}\label{eq:sum-clock}
 0\leq\sum_{i=1}^m j_i\leq2m,
 \qquad
 \sum_{i=1}^m j_i\equiv q-1=2m-2\pmod q.
\end{equation}
The only integer $z \in [0,2m]$ with $z \equiv q-1 \mod q$
 is $q-1 = 2m-2$.
Indeed, subtracting $q=2m-1$ gives $-1$, while adding $q$ gives
$4m-3>2m$, since $m\geq2$ by \eqref{eq:parameter-properties}.
Consequently,
\begin{equation}\label{eq:deficit}
 \sum_{i=1}^m(2-j_i)=2.
\end{equation}
By \eqref{eq:clocked-generators}, if $Y_i\notin\mathcal F$, then
$j_i\leq1$, so this letter contributes at least one to
\eqref{eq:deficit}. Thus there are at most two non-filler letters.
If there are two, both have $j_i=1$, and
\eqref{eq:clocked-generators} implies $Y_i\in\mathcal E\cup\mathcal X$
for each of them.

If every $Y_i$ is a filler, its shift is $0$ and its value at position $0$ is $\id$
by \eqref{eq:coordinate-alphabets}--\eqref{eq:filler-alphabet}.
Then the same holds for the product $Y_1Y_2\cdots Y_m$.
But the first component of $\Lambda$ from \eqref{eq:obstruction} has
 value $h\neq\id$ at position $0$, which gives a contradiction.

If there is exactly one $i\in[1,m]$ such that $Y_i$ is
not a filler, it has a nonzero shift, whereas every filler has shift $0$.
The product therefore cannot have the zero shift of $\Lambda = (\mathbf h,0)$.

It remains to consider the case where there are exactly two $i\in[1,m]$ such that
$Y_i$ is not a filler. We argued above that these non-fillers are from $\mathcal E\cup\mathcal X$.
By \Cref{lem:positions}(i) and
\eqref{eq:entries-exits}, every entry shift is an integer $s$ with $0 < s < r/2$
and every exit shift is an integer $s'$ with $-r/2 < s' < 0$.
Two entry shifts have sum strictly
between $0$ and $r$, whereas two exit shifts have sum strictly between $-r$ and
$0$. Neither sum is $0$ modulo $r$. An entry shift $s$ and an exit
shift $-s'$ cancel modulo $r$ only if $s=s'$, since $0 < s,s' < r/2$.
It follows that the two non-fillers are an entry at a position
$s\in S\setminus\{0\}$ and the matching exit $X_s$.

By \Cref{lem:central-obstruction} we can rotate the word $w$ such that it starts with the entry at $s$.
This rotation still evaluates to $\Lambda$. Its projection onto the first component from $W$ has the form
\begin{equation}\label{eq:rotated-form}
E\,F_1\,X_s\,F_2,
\end{equation}
where $E=(\chi,s)\in\mathcal E_s$ and $F_1,F_2 \in \mathcal{F}^*$. Assume that $F_1$ 
evaluates to  $(\xi,0)$ in $W$. Since every filler has shift $0$ and value $\id$ at position $0$,
the same holds for $F_2$. Hence  the value
of the word \eqref{eq:rotated-form} on position $0$ is
\begin{equation}\label{eq:zero-soundness}
 \chi(0)\xi(s).
\end{equation}
If $s\in S'$, then $\chi(0)\in D_s$ by \eqref{eq:entry-zero-safe}, and
$\xi(s)\in J_s$ by \eqref{eq:coordinate-alphabets}. Thus the value in
\eqref{eq:zero-soundness} belongs to $D_sJ_s$, which does not contain
$h$ by \eqref{eq:allowed-cover}. This is a contradiction.

The remaining case is $s=\tau$. Then $\chi(0)=\id$ by \eqref{eq:entry-zero-hard}, and 
the value of the word \eqref{eq:rotated-form} on position $0$ is $\xi(\tau)$, which implies $h = \xi(\tau)$.
It follows with \eqref{eq:coordinate-alphabets}--\eqref{eq:filler-alphabet} that $\xi(\tau) = h$
can be written as a product  of at most $|F_1| \leq m-2$ elements from
$\Gamma$ (the generating set of $H$ from \eqref{eq:B}). For this, we 
simply remove in $F_1$ all fillers $(\delta_{x,g},0)$ with $x \neq \tau$.
 This contradicts $\ell(h,\Gamma)>m$.

Since we obtained a contradiction in all cases, this implies $\ell(\Lambda,\widehat{\Delta})>m$.
\end{proof}

\Cref{prop:yes} and \Cref{prop:no} yield the correctness of our reduction from 
$\BLength$ to $\BDiameter$. \Cref{alg:reduction} summarizes the whole reduction. 
For each step, we argued that it can be done in polynomial time.

\begin{algorithm}[t]
\caption{Reduction from $\BLength$ to $\BDiameter$}
\label{alg:reduction}
\DontPrintSemicolon
\KwIn{$\Sigma\subseteq\Sym(d)$, $g_0\in\Sym(d)$, and $k\in\N$ in binary.}
\KwOut{A permutation list $\widehat{\Delta}$ and a binary threshold $m$.}
\If{$g_0\notin\langle \Sigma\rangle$}{
 \Return a fixed negative instance of  $\BDiameter$\;
}
\If{$\langle \Sigma\rangle=\{\id\}$}{
 \Return a fixed positive instance of  $\BDiameter$\;
}
Delete common fixed points of $\Sigma$ from the domain of the permutations in $\Sigma \cup \{g_0\}$, and relabel
 the remaining domain as $[1,d]$\;
Choose the least $t$ satisfying \eqref{eq:parameter-choice}; compute
$q,m,\nu,e$ from \eqref{eq:final-clock-parameters} and the disjoint cycles with lengths
\eqref{eq:clock-cycles}\;
Compute the generating set $\Gamma$ in \eqref{eq:B} for the permutation group $H$ in \eqref{eq:group-K}, and compute $h$ from \eqref{eq:h}\;
Choose the least point $i_0$ moved by $h$, put $i_1=i_0h$, and compute
$r,S,S',\tau,a,b$ using
\eqref{eq:sidonic}--\eqref{eq:Q-I}\;
Compute generating sets for the permutation groups $K_i,K$ from \eqref{eq:stabilizers} and
hence for the $J_s$ from \eqref{eq:J-s}, using \Cref{lem:fast}; also compute the sets
$T(H)$, $T(J_s)$, and the transversals
$\overline{H/J_s},\overline{J_s\backslash H}$ from \Cref{lem:fast}\;
Compute $D_s$ from \eqref{eq:allowed-representatives} for all $s\in S'$\;
Compute the set $\mathcal F$ of all fillers  from \eqref{eq:filler-alphabet}\;
Compute the set  $\mathcal E$ of all entries and the set $\mathcal X$ of all exits 
from \eqref{eq:entry-zero-hard}--\eqref{eq:entries-exits}\;
Compute the set of all bridges $\mathcal B$ from \eqref{eq:bridge-definition}\;
Take a copy $\beta$ of $\alpha^q$ on a disjoint domain, as in
\eqref{eq:output-group}, and produce the set $\widehat{\Delta}$ consisting of all
clock variants of elements from $\mathcal F \cup \mathcal E \cup \mathcal X \cup \mathcal B$ (see
\eqref{eq:clocked-generators}) as permutations of $[1,r \cdot e+\nu]$\;
\Return $(\widehat{\Delta},m)$ with $m$ in binary encoding\;
\end{algorithm}

\section{The length and diameter problems for 2-step nilpotent groups}

In this section we consider the restriction of $\BLength$ and $\BDiameter$
  to the case where the input permutation group $G = \langle \Sigma \rangle$ from \Cref{def:problems}
  is 2-step nilpotent. First of all note that one can easily check in polynomial time whether for a given
  $\Sigma \subseteq \Sym(d)$ the group $\langle \Sigma \rangle$ is 2-step nilpotent. This is
  the case if  and only if for all $a,b,c \in \Sigma$, $[a,b]c = c[a,b]$ holds. The direction from left to right 
  is clear. For the other direction, assume that $[a,b]c = c[a,b]$ for all $a,b,c \in \Sigma$.
  Hence, every commutator $[a,b]$ with $a,b \in \Sigma$ belongs to $Z(G)$ and the images of 
  the elements from $\Sigma$ in the quotient $G/Z(G)$ commute. Since $\Sigma$ generates $G$,
  it follows that  $G/Z(G)$ is abelian and hence $[G,G] \leq Z(G)$.
  
  Our algorithm will reduce the problem $\BLength$ for 2-step nilpotent permutation groups to 
  a combinatorial problem about words that we introduce in the following subsection.

\subsection{Subword profiles and nilpotent groups} 

A word $u \in \Sigma^\ast$ is a (scattered) \emph{subword} of $w \in \Sigma^\ast$ if it can be obtained from $w$ by removing some of its letters.
For instance, $u = aba$ is a subword of $w = \underline{a}b\underline{b}a\underline{a}$, whereas $u' = bab$ is not.
We will be interested in the number of occurrences of a word $u$ as a subword of $w$, which we denote by $\tbinom{w}{u}$; here, an \emph{occurrence} of $u$ in $w$ is defined as a sequence of positions $1 \leq i_1 < \dotsb < \smash{i_{\abs{u}}} \leq \abs{w}$ with $w[i_1] \cdots w[i_{\abs{u}}] = u$.
Thus, the empty word $\varepsilon$ occurs exactly once in every word $w \in \Sigma^\ast$, and $\smash{\tbinom{w}{u}} = 4$ for $u = aba$ and $w = abbaa$ as above.

Splitting an occurrence of $u$ in a product $w_1 w_2$ into the positions inside $w_1$ and those inside $w_2$ yields the well-known identity
\begin{equation}\label{eqn:subword-product}
  \tbinom{w_1 w_2}{u} = \sum_{u = u_1 u_2} \tbinom{w_1}{u_1} \tbinom{w_2}{u_2},
\end{equation}
where summation extends over all factorizations $u = u_1 u_2$ with $u_1, u_2 \in \Sigma^\ast$.

In the following, we write $\Sigma_k$ for the set of all words $u \in \Sigma^\ast$ with $\abs{u} \leq k$, and $\Psi_k(\Sigma)$ for the set of all functions $\Sigma_k \to \mathbb{N}$.
The latter becomes a monoid with respect to the product
\begin{equation}\label{eqn:profile-composition}
  (\nu_1 \cdot \nu_2)(u) = \sum_{u = u_1u_2} \nu_1(u_1)\nu_2(u_2),
\end{equation}
where, as in \eqref{eqn:subword-product}, summation extends over all factorizations $u = u_1 u_2$ with $u_1,u_2 \in \Sigma^\ast$; note that $u_1, u_2 \in \Sigma_k$ whenever $u \in \Sigma_k$.
Its neutral element maps $\varepsilon$ to $1$ and all nonempty words to $0$.

For a word $w \in \Sigma^\ast$, we call the function $\psi_k(w) \in \Psi_k(\Sigma)$ with $(\psi_k(w))(u) = \tbinom{w}{u}$ the \emph{$k$-profile} of $w$.
By \eqref{eqn:subword-product}, the map $\psi_k \colon \Sigma^\ast \to \Psi_k(\Sigma)$ given by $w \mapsto \psi_k(w)$ is a homomorphism.

Our interest in this word statistic originates from the following fact first observed by Magnus~\cite{Magnus1937}: 
for every $k$-step nilpotent group~$G$, the image of a word $w \in \Sigma^\ast$ under a homomorphism $\varphi\colon \Sigma^\ast \to G$ is completely determined by its $k$-profile $\psi_k(w)$; that is, $\psi_k(w) = \psi_k(w') \Rightarrow \varphi(w) = \varphi(w')$.
In fact, Magnus~\cite{Magnus1937} proved that two words have the same $k$-profile if and only if they cannot be separated via homomorphisms to $k$-step nilpotent groups.
Here, we will only use the following much simpler case.

\begin{lemma}\label{lem:profile-evaluation}
  Let $\varphi \colon \Sigma^\ast \to G$ be a homomorphism to a $2$-step nilpotent group $G$, and let $<$ be a linear order on $\Sigma$.
  Then every $w \in \Sigma^\ast$ with $\nu = \psi_2(w)$ satisfies
  \[
    \varphi(w) = \prod_{a} \varphi(a)^{\nu(a)} \cdot \prod_{a < b} [\varphi(b), \varphi(a)]^{\nu(ba)},
  \]
  where the product $\displaystyle \prod_{a}$ runs over the elements of $\Sigma$ in increasing order with respect to $<$.
\end{lemma}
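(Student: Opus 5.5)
Both sides are functions of $w$, and I will prove equality by induction on $\abs{w}$. Define, for each $w$, the right-hand side
\[
R(w) = \prod_{a} \varphi(a)^{\nu_w(a)} \cdot \prod_{a<b} [\varphi(b),\varphi(a)]^{\nu_w(ba)},
\]
where $\nu_w=\psi_2(w)$. For $w=\varepsilon$ all counts vanish, so $R(\varepsilon)=\id=\varphi(\varepsilon)$. For the inductive step, it suffices to show $R(wc)=R(w)\varphi(c)$ for every letter $c\in\Sigma$.

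By \eqref{eqn:subword-product}, $\tbinom{wc}{a}=\tbinom{w}{a}+[a=c]$. For two-letter words we get $\tbinom{wc}{ba}=\tbinom{w}{ba}+\tbinom{w}{b}[a=c]$. Hence passing from $w$ to $wc$ changes three things:
\begin{itemize}
\item the exponent of $\varphi(c)$ in the first product increases by one;
\item for each $b>c$, the exponent of $[\varphi(b),\varphi(c)]$ increases by $\nu_w(b)$;
\item the exponents with $a\ne c$ are unchanged.
\end{itemize}
Since all commutators are central, the second product can be freely rearranged. So the claim reduces to an identity in $G$. Write $x_a=\varphi(a)$ and $P=\prod_a x_a^{\nu_w(a)}$, taken in increasing order. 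I need to show
\[
P\,x_c = \Bigl(\prod_{a} x_a^{\nu_w(a)+[a=c]}\Bigr)\cdot \prod_{b>c}[x_b,x_c]^{\nu_w(b)}.
\]
In words: moving $x_c$ from the far right leftwards past the factors $x_b^{\nu_w(b)}$ with $b>c$ produces exactly these commutators.

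The key tool is a basic identity valid in $2$-step nilpotent groups. For any $y,z\in G$ we have $yz=zy[y,z]$, and since $[y,z]$ is central, bilinearity gives $[y^n,z]=[y,z]^n$. Consequently $y^n z = z\,y^n\,[y,z]^n$. Apply this successively to each block $x_b^{\nu_w(b)}$ with $b>c$, from the largest $b$ downward, using $y=x_b$ and $z=x_c$. Each step contributes the central factor $[x_b,x_c]^{\nu_w(b)}$, which can be moved to the end. Once $x_c$ sits immediately after $x_c^{\nu_w(c)}$, the two merge into $x_c^{\nu_w(c)+1}$, giving the claimed identity.

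One point needs checking: the orientation of the commutator. With $[g,h]=g^{-1}h^{-1}gh$, we have $yz=zy[y,z]$ because $zy\cdot y^{-1}z^{-1}yz=yz$. So the factor produced when $x_c$ passes $x_b$ is $[x_b,x_c]=[\varphi(b),\varphi(a)]$ with $a=c<b$, which matches the statement.

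The only genuinely nontrivial ingredient is the bilinearity $[y^n,z]=[y,z]^n$. I prove it by induction on $n$ from the identity $[y_1y_2,z]=y_2^{-1}[y_1,z]y_2\,[y_2,z]$ together with centrality of commutators. Everything else is bookkeeping of subword counts, so I do not expect a real obstacle.
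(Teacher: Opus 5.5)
Your proof is correct and follows essentially the paper's approach. The paper sorts $\varphi(a_1)\cdots\varphi(a_n)$ by adjacent swaps $\varphi(b)\varphi(a)\to\varphi(a)\varphi(b)$, each contributing the central factor $[\varphi(b),\varphi(a)]$, with exactly $\nu(ba)$ such swaps; your induction on $\abs{w}$ is the insertion-sort version of this, where the recurrence $\tbinom{wc}{ba}=\tbinom{w}{ba}+\tbinom{w}{b}[a=c]$ makes the inversion count explicit (the bilinearity $[y^n,z]=[y,z]^n$ is valid but avoidable, since you could move $x_c$ past one letter $x_b$ at a time).
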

\begin{proof}
  Let $w = a_1 \cdots a_n$ with $a_1, \dotsc, a_n \in \Sigma$.
  We transform $\varphi(w) = \varphi(a_1)\varphi(a_2) \cdots \varphi(a_n)$ into the sorted word  
  $\prod_{a} \varphi(a)^{\nu(a)}$
  by repeatedly replacing a factor $\varphi(b)\varphi(a)$ with letters $a < b$ by its corresponding sorted factor $\varphi(a)\varphi(b)$.
  Since $\varphi(b)\varphi(a) = \varphi(a)\varphi(b) \cdot [\varphi(b), \varphi(a)]$ and commutators are central in $2$-step nilpotent groups, each swap amounts    to multiplying by a commutator.
  Finally, we observe that for all $a<b$, exactly $\nu(ba)$ swaps $\varphi(b)\varphi(a) \to \varphi(a)\varphi(b)$ are done in the sorting process.
\end{proof}

For fixed $k \geq 0$, the \emph{$k$-profile realization problem}, or $\textup{\textsc{$k$-Realization}}$, is the following decision problem:
\begin{quote}
  \textbf{Input:} A finite alphabet $\Sigma$ and a function $\nu \in \Psi_k(\Sigma)$.\\[1.5mm]
  \textbf{Question:} Is $\nu = \psi_k(w)$ for some $w \in \Sigma^\ast$?
\end{quote}
Here we are interested in the version $\textup{\textsc{Binary $k$-Realization}}$ where $\nu$ is given as a vector of its binary encoded values $\nu(u)$.
Thus, the input to the problem has size $\Theta(\abs{\Sigma_k} + \Abs{\nu})$ (here, we identify $\nu$ with the tuple of all numbers $\nu(u)$ for $u \in \Sigma_k$).

Note that, for $k \geq 1$, there is a direct relation between the length of a word $w$ and the values of its $k$-profile $\nu = \psi_k(w)$.
On the one hand, $\abs{w} = \sum_{a \in \Sigma} \nu(a)$.
On the other hand, we have~$\nu(u) \leq \abs{w}^{\abs{u}}$ since every occurrence of $u$ in $w$ is determined by $\abs{u}$ positions of $\abs{w}$.
Together with \eqref{eqn:log-sum}, this yields the bounds
\begin{equation}\label{eqn:length-vs-profile}
  \log (\abs{w} + 1) \leq \Abs{\nu}
  \qquad\text{and}\qquad
  \nu(u) \leq (\abs{w} + 1)^k
\end{equation}
for all $u \in \Sigma_k$.
In particular, the latter implies that $\Abs{\nu} \leq \mathcal{O}( k^2 \cdot \abs{\Sigma}^k \cdot \log (\abs{w} + 1))$.

The technical main ingredient in this section is the following result of independent interest:
\begin{proposition}\label{pro:profile-realization}
$\textup{\textsc{Binary $2$-Realization}}$ is in \NP. 
\end{proposition}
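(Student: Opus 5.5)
We need an NP certificate of polynomial size in $\abs{\Sigma}^2 + \Abs{\nu}$ proving that the 2-profile $\nu$ is realized by some word. The word itself may be exponentially long, so we will certify a compressed word instead. The plan is to show that whenever $\nu$ is realizable, it is realized by a word of the form
\[
  w = a_1^{e_1} a_2^{e_2} \cdots a_N^{e_N}, \qquad a_i \in \Sigma,\ e_i \in \N,
\]
where the number of blocks $N$ is polynomial in $\abs{\Sigma}$ and each $e_i$ has at most $\Abs{\nu}$ bits. Such a certificate is polynomial. Verification is then a polynomial-time computation. We use $\nu(a) = \sum_{a_i = a} e_i$, and for $a \neq b$,
\[
  \nu(ab) = \sum_{i<j,\ a_i=a,\ a_j=b} e_i e_j .
\]
For $a = b$ we have $\nu(aa) = \binom{\nu(a)}{2}$, which is checked directly. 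All of these sums can be evaluated by prefix sums over the blocks, using binary arithmetic. By \eqref{eqn:length-vs-profile}, all the numbers involved have bit length $\mathcal O(\Abs{\nu})$.

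The heart of the proof is the structural claim that few blocks suffice. We view a word with letter counts $c_a = \nu(a)$ as a lattice path in $\N^\Sigma$ from $0$ to $c$. For each pair $a < b$, the quantity $\nu(ab)$ is an area-type functional of the path's projection onto the $(a,b)$-plane: it counts the pairs (occurrence of $a$, later occurrence of $b$). Note also that $\nu(ab) + \nu(ba) = c_a c_b$, so only one value per unordered pair matters.

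First approach: an exchange and convexity argument. Take a realizing word with the minimum number of blocks, and suppose $N$ is large. Consider the vector of contributions of the blocks to the $\binom{\abs{\Sigma}}{2}$ pair counts. We aim to show that some local moves preserve all pair counts while merging blocks. One such move transfers letters between two blocks of the same letter $a$ that lie at different places. Moving one $a$ from block $i$ to block $j > i$ increases $\nu(ba)$ by the number $\beta_b$ of $b$'s lying between the two blocks, for each $b$. Combining two such transfers with opposite effects, in a Carathéodory-style argument, should allow reducing the number of distinct blocks of each letter to $\mathcal O(\abs{\Sigma}^2)$ per letter. The reason is that the constraints are linear in the multiplicities once the interleaving pattern is fixed.

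Concretely, fix the sequence of block letters and treat the exponents $e_i$ as unknowns. Then $\nu(a)$ is linear in the exponents and $\nu(ab)$ is bilinear. The idea is to fix the exponents of all letters except one letter $a$. The constraints on the $a$-exponents then become linear: $\sum e_i = c_a$ together with $\abs{\Sigma}-1$ equations $\sum_i e_i \cdot (\#b \text{ after block } i) = \nu(ab)$. This defines an integer linear system with $\abs{\Sigma}$ equations in nonnegative integer variables. By the Eisenbrand--Shmonin / Carathéodory bound for integer cones, it has a solution with only $\mathcal O(\abs{\Sigma}\log(\abs{\Sigma}\cdot M))$ nonzero variables. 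Here $M$ is bounded by the length, hence polynomial in $\Abs{\nu}$. Setting a variable to zero deletes a block, which then lets neighbouring blocks merge. Iterating this over the letters, the process must terminate because the number of blocks strictly decreases.

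The main obstacle is making this iteration converge to a polynomial bound. Sparsifying the $a$-blocks reduces only the $a$-blocks; the blocks of other letters may still be numerous, because they are separated by the $a$-blocks that remain. Only after deletion do adjacent equal letters merge. I would try a potential argument instead. Between consecutive retained $a$-blocks, the word restricted to $\Sigma \setminus \{a\}$ forms segments, and by induction on $\abs{\Sigma}$ the other letters can be handled. The resulting bound should be something like $N \le \abs{\Sigma}^{\mathcal O(\abs{\Sigma})}$ blocks times logarithmic factors. If that bound is exponential in $\abs{\Sigma}$, this approach fails.

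Fallback: certify the word only through a polynomial-size straight-line program (SLP). We then need an argument that some realizing word admits an SLP whose size is polynomial in $\abs{\Sigma}$ and $\log\abs{w}$. The 2-profile of an SLP is computable in polynomial time by composing profiles via \eqref{eqn:profile-composition}, and that product involves only polynomially many binary numbers. Obtaining such a compressible witness is again the sparsification step above, now allowing periodic repetitions as well as blocks. So the crux of the proof remains this compression lemma.
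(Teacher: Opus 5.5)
Your certificate (a run-length encoded word) and its polynomial-time verification match the paper's proof. So does your key step: freeze every letter except $a$, note that $\nu(a)$ and the $\nu(ab)$ become linear in the $a$-exponents, and apply the Eisenbrand--Shmonin bound. This is exactly the content of \cref{lem:profile-compression}. The gap is that you never close the argument. You call the convergence of the letter-by-letter sparsification to polynomially many blocks the ``main obstacle'', conjecture a possibly exponential bound via induction on $\abs{\Sigma}$, and fall back on an SLP approach whose compression lemma you leave open. As it stands, the existence of a short witness, which is the crux of the proposition, is not proved.

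The obstacle is not real, and one observation removes it. When you re-solve for the $a$-exponents, the $a$'s go back only into slots that already held $a$-runs, and all other letters keep their positions. So no run of another letter $b$ is ever split; two $b$-runs can only merge, when the $a$-run between them gets exponent $0$. Hence sparsifying $a$ never increases the number of runs of any other letter. You already use this implicitly when you say the total number of blocks strictly decreases.

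It does not matter that the $b$-runs are at that moment separated by many $a$-runs. When it is $b$'s turn, Eisenbrand--Shmonin bounds the number of nonzero $b$-exponents directly, and later steps for other letters cannot raise this count again. One pass over $\Sigma$ therefore leaves at most $m_a=\log(\nu(a)+1)+\sum_{b\neq a}\log(\nu(ab)+1)$ runs of each letter $a$, hence at most $\sum_a m_a\le\Abs{\nu}$ runs in total. With your variant of the bound you get $\mathcal O(\abs{\Sigma}^2(\log\abs{\Sigma}+\Abs{\nu}))$, which is equally good. No induction on $\abs{\Sigma}$ and no SLPs are needed.

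Two minor corrections:
\begin{enumerate}
\item What is polynomial in $\Abs{\nu}$ is $\log M$, not $M$, since $M\le\abs{w}$ and $\log(\abs{w}+1)\le\Abs{\nu}$.
\item The block bound you obtain is polynomial in $\abs{\Sigma}$ and $\Abs{\nu}$, not in $\abs{\Sigma}$ alone as your first paragraph claims. That is all you need, since the input has size $\Theta(\abs{\Sigma_2}+\Abs{\nu})$.
\end{enumerate}
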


As observed above, a word $w$ with $\psi_2(w) = \nu$ can have length exponential in the input size; hence, we cannot simply guess such a word letter by letter.
Instead, the proof of \cref{pro:profile-realization} relies on guessing a witness in compressed form.
We prove that, firstly, $k$-profiles of run-length encoded words can be computed efficiently (\cref{lem:profile-verification-rle}) and, secondly, that every $2$-profile $\nu$ that is realized by some word is also realized by a word whose run-length encoding has polynomial size (\cref{lem:profile-compression}).

Before we turn to the proof of \cref{pro:profile-realization}, we show how to derive 
Theorems~\ref{thm-nil-length} and \ref{thm-nil-diameter}:

\begin{proof}[Proof of \Cref{thm-nil-length}]
\NP-hardness holds already for abelian permutation groups \cite{LohreyRosowski2023}.
  Suppose given the generating set $\Sigma \subseteq \Sym(d)$ of a $2$-step nilpotent group $G$, a target element $g \in G$, and a length bound $\ell$  encoded in binary. Let $n = |\Sigma|$.
  If $g$ can be expressed by a word $w \in \Sigma^\ast$ of length at most $\ell$, then we can guess the $2$-profile $\nu = \psi_2(w)$, which is of size $\Abs{\nu} \leq \mathcal{O}(n^2 \log(\ell + 1))$.
  We then verify that (i) any word realizing $\nu$ evaluates to $g$ (\cref{lem:profile-evaluation}), and (ii) that such a word exists (\cref{pro:profile-realization}).
  Step (i) can be carried out in polynomial time using fast exponentiation for binary encoded exponents, whereas step (ii) is in \NP.
  \end{proof}

\begin{proof}[Proof of \Cref{thm-nil-diameter}]
Hardness for $\mathsf{\Pi_2^P}$ holds already for abelian permutation groups \cite{LohreyRosowski2023}.
For the upper bound, consider a set of generators $\Sigma \subseteq \Sym(d)$ of a $2$-step nilpotent group $G$ and a length bound $\ell$  encoded in binary.  We guess universally a permutation $g \in \Sym(d)$, test in polynomial time whether $g$ belongs to the input
permutation group $\langle \Sigma \rangle$, and, in case $g \in \langle \Sigma \rangle$, verify in 
 \NP\ whether $(\Sigma, g, \ell)$ is a positive instance of  $\BLength$ restricted to $2$-step nilpotent groups.
 \end{proof}

\subsection{Run-length encoded words}

Every word $w \in \Sigma^\ast$ can be uniquely written as $w = a_1^{\lambda_1} \cdots a_n^{\lambda_n}$
with $n \in \mathbb{N}$, letters $a_1, \dotsc, a_n \in \Sigma$ such that $a_i \neq a_{i+1}$ for $1 \leq i < n$, and exponents $\lambda_1, \dotsc, \lambda_n \geq 1$.
We call the factors $a_i^{\lambda_i}$ the \emph{runs} of $w$, and the runs $a_i^{\lambda_i}$ with $a_i = a$ the \emph{$a$-runs} of $w$.
The \emph{run-length encoding} of the word $w$ is the sequence $(a_1, \lambda_1), \dotsc, (a_n, \lambda_n)$ with all exponents encoded in binary.
We define the \emph{run-length size} of $w$ as 
\begin{equation*}
\mathrm{rle}(w) = \log(\lambda_1 + 1) + \cdots + \log(\lambda_n + 1).
\end{equation*}
Since $\lambda_1, \dotsc, \lambda_n \geq 1$, the run-length size of $w$ is at least its number of runs~$n$.
Moreover, $\mathrm{rle}(w) \geq \log(\abs{w} + 1)$.

\begin{lemma}\label{lem:profile-verification-rle}
  Let $k \geq 0$ be fixed.
  Given the run-length encoding of a word $w \in \Sigma^\ast$, its $k$-profile $\psi_k(w)$ can be computed in time polynomial in the run-length size $\mathrm{rle}(w)$ and the alphabet size $\abs{\Sigma}$.
\end{lemma}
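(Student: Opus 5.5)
The plan is to use the homomorphism property of $\psi_k$, see \eqref{eqn:subword-product} and \eqref{eqn:profile-composition}. Write $w=a_1^{\lambda_1}\cdots a_n^{\lambda_n}$. Then
\[
\psi_k(w)=\psi_k(a_1^{\lambda_1})\cdot\psi_k(a_2^{\lambda_2})\cdots\psi_k(a_n^{\lambda_n})
\]
in the monoid $\Psi_k(\Sigma)$. So it suffices to do two things: compute the profile of a single run in closed form, and then multiply $n$ profiles together. Both steps must stay polynomial in $\mathrm{rle}(w)$ and $\abs{\Sigma}$.

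For a single run $a^\lambda$, an occurrence of $u$ exists only if $u=a^j$ with $j\le k$. In that case $\tbinom{a^\lambda}{a^j}=\tbinom{\lambda}{j}$, and the profile is $0$ on every other word. Since $k$ is fixed, $\tbinom{\lambda}{j}$ is a product of at most $k$ factors of bit length $\mathcal{O}(\log(\lambda+1))$, divided by $j!$. It is therefore computable in time polynomial in $\log(\lambda+1)$, and its bit length is $\mathcal{O}(k\log(\lambda+1))$. Each run profile is stored as a table indexed by $\Sigma_k$, and $\abs{\Sigma_k}\le(\abs{\Sigma}+1)^k$ is polynomial in $\abs{\Sigma}$ for fixed $k$.

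Next I would compute the prefix profiles $\nu_i=\psi_k(a_1^{\lambda_1}\cdots a_i^{\lambda_i})$ iteratively via $\nu_i=\nu_{i-1}\cdot\psi_k(a_i^{\lambda_i})$. Each product takes $\abs{\Sigma_k}\cdot(k+1)$ multiplications and additions, because every $u\in\Sigma_k$ has at most $k+1$ factorizations $u=u_1u_2$. The total number of monoid multiplications is $n\le\mathrm{rle}(w)$.

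The step I expect to need care is bounding the bit lengths of the intermediate numbers, so that the arithmetic cost stays polynomial. Here I would use the true values rather than the recursion: every entry of $\nu_i$ is a subword count of a prefix of $w$. By \eqref{eqn:length-vs-profile} it is at most $(\abs{w}+1)^k$, so its bit length is at most $k\log(\abs{w}+1)+1\le k\,\mathrm{rle}(w)+1$. The intermediate products and sums inside \eqref{eqn:profile-composition} are likewise bounded by that final value. Consequently every arithmetic operation acts on numbers of $\mathcal{O}(k\,\mathrm{rle}(w))$ bits. Multiplying the operation count by the cost per operation then gives a total running time polynomial in $\mathrm{rle}(w)$ and $\abs{\Sigma}$, with $k$ fixed.
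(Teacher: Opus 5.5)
Your proposal is correct and follows essentially the same route as the paper: split $w$ into runs, compute each run profile via $\tbinom{\lambda}{j}$ for $u=a^j$, and multiply the run profiles using \eqref{eqn:profile-composition}. You also bound every intermediate number, as the paper does, by a subword count of a prefix of $w$, hence by $(\abs{w}+1)^k$, together with $\log(\abs{w}+1)\le\mathrm{rle}(w)$.
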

\begin{proof}
  Let us first note that the composition $\psi_k(w_1) \cdot \psi_k(w_2) = \psi_k(w_1 w_2)$ of two $k$-profiles is computable via \eqref{eqn:profile-composition} using at most $2(k+1) \cdot \abs{\Sigma_k} \le \mathcal{O}(k^2 \cdot \abs{\Sigma}^k)$ arithmetic operations.
  All integers occurring in this computation are bounded by the entries of $\psi_k(w_1 w_2)$.

  Now let $w = w_1 \cdots w_n$ be the decomposition of $w$ into its runs $w_i = a_i^{\lambda_i}$.
  We compute the $k$-profile of~$w$ from the $k$-profiles of its runs using composition; that is, as $\psi_k(w) = \psi_k(w_1)  \cdots \psi_k(w_n)$.
  This computation involves $n - 1 \leq \mathrm{rle}(w)$ compositions, where the values of $\psi_k(w)$ and, hence, of all intermediate $k$-profiles are bounded by $(\abs{w} + 1)^k$. Hence, the bit length of the involved numbers is polynomially bounded in $\mathrm{rle}(w)$.
  It therefore suffices to consider a single run~$a^\lambda$, where the only nonzero values of the $k$-profile $\psi_k(a^\lambda)$ are the numbers $\smash{\tbinom{\lambda}{\alpha}}$ counting occurrences of a subword~$a^\alpha$ with $0 \leq \alpha \leq k$.
  These values can clearly be computed in polynomial time.
\end{proof}

\subsection{Witnesses with few runs}

Next, we show that if some $\nu \in \Psi_2(\Sigma)$ satisfies $\nu = \psi_2(w)$ for a word $w \in \Sigma^\ast$, then we can always find such a word $w$ with few runs, and thus with small run-length encoding.
To this end, we use the following Carathéodory bound for integer cones due to Eisenbrand and Shmonin~\cite[Theorem~1(i)]{ES2006}.
Here, for a finite set of vectors $X = \Set{x_1, \dotsc, x_n} \subseteq \mathbb{N}^d$, we write \[
  \mathrm{cone}_\mathbb{N}(X) = \Set{ \lambda_1 x_1 + \cdots + \lambda_n x_n \given \lambda_1, \dotsc, \lambda_n \in \mathbb{N}} \subseteq \mathbb{N}^d
\]
for the corresponding integer cone.

\begin{lemma}\label{lem:eisenbrand-shmonin}
  Let $X \subseteq \mathbb{N}^d$ be finite.
  If $t \in \mathrm{cone}_\mathbb{N}(X)$, then $t \in \mathrm{cone}_\mathbb{N}(\tilde X)$ for some $\tilde X \subseteq X$ with 
  \[
    \abs{\tilde X} \leq \log (t_1 + 1) + \cdots + \log (t_d + 1) = \Abs{t}.
  \]
\end{lemma}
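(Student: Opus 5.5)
The plan is the standard minimal-support pigeonhole argument of Eisenbrand and Shmonin. Write $t = \sum_{x \in X} \lambda_x x$ with $\lambda_x \in \mathbb{N}$. Among all such representations, choose one whose support $\tilde X = \Set{x \in X \given \lambda_x \geq 1}$ has minimum cardinality $n = \abs{\tilde X}$. Clearly $t \in \mathrm{cone}_\mathbb{N}(\tilde X)$. It therefore suffices to show $n \leq \Abs{t}$, that is, $2^n \leq \prod_{j=1}^d (t_j + 1)$. We argue by contradiction and assume $2^n > \prod_j (t_j+1)$.

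\emph{Pigeonhole step.} For every subset $A \subseteq \tilde X$, consider the vector $\sigma(A) = \sum_{x \in A} x \in \mathbb{N}^d$. All vectors lie in $\mathbb{N}^d$ and every $\lambda_x \geq 1$ on the support. Hence $\sigma(A) \leq \sum_{x \in \tilde X} \lambda_x x = t$ componentwise, so $\sigma(A)$ lies in the box $\prod_j [0, t_j]$, which has exactly $\prod_j (t_j+1) < 2^n$ points. There are $2^n$ subsets, so two distinct subsets $A \neq B$ satisfy $\sigma(A) = \sigma(B)$. Removing the common part $A \cap B$ from both sides preserves the equality. Thus we may assume $A$ and $B$ are disjoint, and (after swapping) that $A \neq \emptyset$.

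\emph{Exchange step.} Let $\mu = \min_{x \in A} \lambda_x \geq 1$. Define new coefficients $\lambda'_x = \lambda_x - \mu$ for $x \in A$, $\lambda'_x = \lambda_x + \mu$ for $x \in B$, and $\lambda'_x = \lambda_x$ otherwise. All $\lambda'_x$ lie in $\mathbb{N}$. Because $\sigma(A) = \sigma(B)$ and $A \cap B = \emptyset$, we still have $\sum_x \lambda'_x x = t - \mu\sigma(A) + \mu\sigma(B) = t$. The new support is contained in $\tilde X$, since $B \subseteq \tilde X$, and it misses the element of $A$ attaining the minimum. This gives a representation with strictly smaller support, contradicting minimality. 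The degenerate case $B = \emptyset$ is covered by the same computation: then $\sigma(A) = 0$, so the vectors in $A$ are zero, and lowering their coefficients changes nothing.

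The only point needing care is the box bound in the pigeonhole step. It uses both that all vectors are nonnegative and that every coefficient on the minimal support is at least $1$; this is exactly why we pass to a minimal-support representation first. Everything else is routine bookkeeping. The lemma in the form stated is in fact \cite[Theorem~1(i)]{ES2006}, so one could also simply cite it.
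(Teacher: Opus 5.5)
Your proof is correct and complete. You choose a minimum-support representation, pigeonhole the $2^n$ subset sums into the box $\prod_j [0,t_j]$ (valid because the vectors are nonnegative and every coefficient on the support is at least $1$), and then shift $\mu=\min_{x\in A}\lambda_x$ from $A$ to the disjoint set $B$ to shrink the support; the trivial cases $t=0$ and $B=\emptyset$ are handled as well.

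The paper gives no proof of this lemma and only cites \cite[Theorem~1(i)]{ES2006}. Your argument is the standard exchange proof behind that result, so it makes the lemma self-contained rather than taking a different route.
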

For $k \leq 2$, the $k$-profile of a word depends linearly on how the occurrences of a single letter are distributed among its runs.
This allows us to use \cref{lem:eisenbrand-shmonin} to reduce the number of runs without changing the $2$-profile.

\begin{lemma}\label{lem:profile-compression}
  Let $w \in \Sigma^\ast$.
  Then $\psi_2(w) = \psi_2(w')$ for some $w' \in \Sigma^\ast$ with at most $\Abs{\psi_2(w)}$ runs.
\end{lemma}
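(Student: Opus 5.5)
The plan is to shrink the number of runs one letter at a time. For each letter we re-choose the lengths of its runs with \Cref{lem:eisenbrand-shmonin}, keeping the $2$-profile unchanged and leaving few runs of that letter with nonzero length. Write $\nu=\psi_2(w)$.

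\textbf{Step 1: the profile is linear in the run lengths of one letter.} Fix $a\in\Sigma$ and write $w=u_0a^{\lambda_1}u_1a^{\lambda_2}\cdots a^{\lambda_p}u_p$, where the $u_j$ contain no $a$. For $b\neq a$ let $c_{i,b}$ be the number of occurrences of $b$ in $u_i\cdots u_p$, that is, the number of $b$'s after the $i$-th $a$-run. Then
\[
\nu(a)=\sum_i\lambda_i,\qquad \nu(ab)=\sum_i\lambda_i c_{i,b}\quad(b\neq a).
\]
The remaining entries follow from these and from data not involving the $\lambda_i$:
\[
\nu(aa)=\tbinom{\nu(a)}{2},\qquad \nu(ba)=\nu(a)\nu(b)-\nu(ab),
\]
and $\nu(\varepsilon)$, $\nu(b)$, $\nu(bc)$ for $b,c\neq a$ depend only on the $u_j$. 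Set $x_i=(1,(c_{i,b})_{b\neq a})\in\N^{|\Sigma|}$ and $t_a=(\nu(a),(\nu(ab))_{b\neq a})$. Then $t_a=\sum_i\lambda_ix_i\in\mathrm{cone}_\N(\{x_1,\dots,x_p\})$. Replacing $(\lambda_i)$ by any $(\lambda'_i)\in\N^p$ with $\sum_i\lambda'_ix_i=t_a$ yields a word with the same $2$-profile, because every entry listed above is preserved.

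\textbf{Step 2: apply \Cref{lem:eisenbrand-shmonin}.} It gives a subset of the $x_i$ of size at most $\Abs{t_a}$ whose nonnegative integer combination equals $t_a$. Equal vectors $x_i$ are merged in the set $X$, so we assign each coefficient to one run carrying that vector. We set $\lambda'_i$ to these coefficients and use $0$ for all other runs, then delete the $a$-runs with $\lambda'_i=0$. The resulting word has the same profile and at most $\Abs{t_a}$ runs of $a$. Deleting these runs may make neighbouring runs of another letter adjacent and merge them, but this never increases any run count.

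\textbf{Step 3: iterate and count.} Process the letters one after another, each time recomputing the $c_{i,b}$ from the current word; this is legitimate because every step preserves $\nu$, so each $t_a$ is the same as for the original $w$. Later steps only delete runs of other letters or merge runs, so the bound for $a$ obtained earlier survives. The final $w'$ therefore has at most $\sum_a\Abs{t_a}$ runs.

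The point needing care is that this sum is at most $\Abs{\nu}$. This is why $t_a$ is restricted to the entries $\nu(a)$ and $\nu(ab)$ with first letter $a$, and does not also include $\nu(ba)$, which is recovered from $\nu(a)\nu(b)-\nu(ab)$. With this choice, the coordinates of the different $t_a$ are distinct entries of $\nu$, indexed by $a$ or by ordered pairs $ab$ with $a\neq b$. Hence $\sum_a\Abs{t_a}\le\Abs{\nu}$, since the omitted entries contribute nonnegatively. The main obstacle I expect is making this no-double-counting argument and the stability of the run counts across iterations precise.
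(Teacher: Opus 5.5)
Your proposal is correct and follows essentially the same route as the paper. Both redistribute the $a$'s among the $a$-runs using the integer-cone Carathéodory bound, applied to the vectors $(1,(c_{i,b})_{b\neq a})$ with target $(\nu(a),(\nu(ab))_{b\neq a})$; both recover $\nu(aa)$ and $\nu(ba)$ from the standard identities, and both bound the total by $\sum_a \Abs{t_a}\le\Abs{\nu}$ because distinct letters use disjoint entries of $\nu$. Your explicit treatment of coinciding vectors $x_i$ and of run merging across iterations just makes precise details the paper leaves implicit.
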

\begin{proof}
  Let $\nu = \psi_2(w)$.
  Fix $a \in \Sigma$ and let $\Gamma = \Sigma \setminus \Set{a}$.
  In analogy to $\Sigma_k$, we write $\Gamma_k$ for the set of all words $u \in \Gamma^\ast$ with $\abs{u} \leq k$; in particular, $\Gamma_1$ comprises the letters $b \in \Gamma$ and the empty word $\varepsilon$.

  Write $w = w_0 a^{\lambda_1} w_1 \cdots a^{\lambda_n} w_n$ with $n \in \mathbb{N}$, $\lambda_1, \dotsc, \lambda_n \geq 1$, and $w_0, \dotsc, w_n \in \Gamma^\ast$.
   We will show that the $a$'s occurring in $w$ can be redistributed to obtain a word $w' = w_0 a^{\lambda'_1} w_1 \cdots a^{\lambda'_n} w_n$ with the same $2$-profile (that is, with $\psi_2(w') = \nu$) such that at most
  \[
    m_a \coloneqq \sum_{u \in \Gamma_1} \log (\nu(au) + 1) = \log (\nu(a) + 1) + \sum_{b \in \Gamma} \log (\nu(ab) + 1)
  \]
  of the exponents $\lambda'_1, \dotsc, \lambda'_n \in \mathbb{N}$ are nonzero.
  Thus, the number of $a$-runs of $w'$ is at most $m_a$.
  Observe that, for every $b \in \Gamma$, the number of $b$-runs of $w'$ is at most the number of $b$-runs of $w$, as the only effect of the redistribution on $b$-runs is that two of them merge if the $a$-run separating them vanishes.
  Since $m_a$ only depends on $\nu$, which is preserved, we can repeat this process for every $a \in \Sigma$ to eventually obtain a word with the $2$-profile $\nu$ and at most $\sum_{a \in \Sigma} m_a \leq \Abs{\nu}$ runs in total; here, the inequality holds as every term $\log (\nu(u) + 1)$ with $u \in \Sigma_2$ occurs at most once in $\sum_{a \in \Sigma} m_a$.

  To see that such a redistribution is possible, consider the vectors $x_1, \dotsc, x_n \in \mathbb{N}^{\Gamma_1}$, where
  the components of $x_i$ ($1 \leq i \leq n$) are
  \[x_{i,u} = \tbinom{w_i \cdots w_n}{u}\]
  for $u \in \Gamma_1$. Note that $x_{i,\varepsilon}=1$ for all $i \in [1,n]$.
  By construction, the vector $t = \lambda_1 x_1 + \cdots + \lambda_n x_n$ is contained in the integer cone $\mathrm{cone}_\mathbb{N}(X)$ spanned by $X = \Set{x_1, \dotsc, x_n}$.
  Its coordinates are given by 
  \[
    t_\varepsilon = \lambda_1 x_{1, \varepsilon} + \cdots + \lambda_n x_{n, \varepsilon} = \nu(a)
    \quad\text{and}\quad
    t_b = \lambda_1 x_{1,b} + \cdots + \lambda_n x_{n,b} = \nu(ab)
    \text{ for }
    b \in \Gamma,
  \]
  where the latter holds since each of the $\lambda_i$ occurrences of $a$ in the factor $a^{\lambda_i}$ is followed by exactly $x_{i,b}$ occurrences of $b$ in the corresponding suffix of $w$.
  Thus, $t_u = \nu(au)$ for all $u \in \Gamma_1$, so that $\Abs{t} = m_a$.
  Hence, by \cref{lem:eisenbrand-shmonin}, there indeed exist $\lambda'_1, \dotsc, \lambda'_n \in \mathbb{N}$ with $t = \lambda'_1 x_1 + \cdots + \lambda'_n x_n$ where at most $m_a$ of the $\lambda'_i$ are nonzero.
  It remains to show that the resulting word $w' = w_0 a^{\lambda'_1} w_1 \cdots a^{\lambda'_n} w_n$ has the same $2$-profile; that is, that $\psi_2(w') = \nu$.
  For $u \in \Gamma_2$, we have $(\psi_2(w'))(u) = (\psi_2(w_0 w_1 \cdots w_n))(u) = \nu(u)$, as deleting all $a$'s from $w'$ or from $w$ yields $w_0 w_1 \cdots w_n$;
  moreover, by the same counting argument as above, $(\psi_2(w'))(au) = t_u = \nu(au)$ for all $u \in \Gamma_1$.
  Together, these equalities imply $(\psi_2(w'))(u) = \nu(u)$ for all $u \in \Sigma_2$, as every word $v \in \Sigma^\ast$ satisfies
  \[
    \tbinom{v}{aa} = \tfrac{1}{2}\tbinom{v}{a}^2 - \tfrac{1}{2}\tbinom{v}{a}
    \quad\text{and}\quad
    \tbinom{v}{ba} = \tbinom{v}{a}\tbinom{v}{b} - \tbinom{v}{ab}
    \text{ for } b \in \Gamma.
    \qedhere
  \]
\end{proof}

We are now ready to prove \cref{pro:profile-realization}.

\begin{proof}[Proof of \cref{pro:profile-realization}]
  Let $\nu \in \Psi_2(\Sigma)$ be the input, and suppose that $\nu = \psi_2(w)$ for some $w \in \Sigma^\ast$. By \eqref{eqn:length-vs-profile} we have
   $\log(\abs{w} + 1) \leq \Abs{\nu}$.
  By \cref{lem:profile-compression}, we can further assume that $w$ has at most $\Abs{\nu}$ runs. Every exponent in the run-length encoding of $w$
  has bit length bounded by $\log(\abs{w} + 1) \leq \Abs{\nu}$.
  We can therefore guess the run-length encoding of such a word $w$ and verify that $\nu = \psi_2(w)$ holds~--~which can be performed in polynomial time by \cref{lem:profile-verification-rle}.
\end{proof}

\subsection*{Disclosure of AI use}

\Cref{thm:main} and \Cref{lem:profile-compression} were obtained with the help of GPT-6 Astra. The authors independently checked and simplified
the proofs and take full responsibility for the final content.


\begin{thebibliography}{10}

\bibitem{Babai16}
L{\'{a}}szl{\'{o}} Babai.
\newblock Graph isomorphism in quasipolynomial time [extended abstract].
\newblock In {\em Proceedings of the 48th Annual {ACM} {SIGACT} Symposium on
  Theory of Computing, {STOC} 2016}, pages 684--697. {ACM}, 2016.
\newblock \href {https://doi.org/10.1145/2897518.2897542}
  {\path{doi:10.1145/2897518.2897542}}.

\bibitem{BabaiBS04}
L{\'{a}}szl{\'{o}} Babai, Robert Beals, and {\'{A}}kos Seress.
\newblock On the diameter of the symmetric group: polynomial bounds.
\newblock In {\em Proceedings of the 15th Annual {ACM-SIAM} Symposium on
  Discrete Algorithms, {SODA} 2004}, pages 1108--1112. {SIAM}, 2004.
\newblock URL: \url{https://dl.acm.org/doi/10.5555/982792.982956}.

\bibitem{BabaiH05}
L{\'{a}}szl{\'{o}} Babai and Thomas~P. Hayes.
\newblock Near-independence of permutations and an almost sure polynomial bound
  on the diameter of the symmetric group.
\newblock In {\em Proceedings of the 16th Annual {ACM-SIAM} Symposium on
  Discrete Algorithms, {SODA} 2005}, pages 1057--1066. {SIAM}, 2005.
\newblock URL: \url{http://dl.acm.org/citation.cfm?id=1070432.1070584}.

\bibitem{BabaiH92}
L{\'{a}}szl{\'{o}} Babai and G{\'{a}}bor Hetyei.
\newblock On the diameter of random {Cayley} graphs of the symmetric group.
\newblock {\em Combinatorics, Probability \& Computing}, 1:201--208, 1992.
\newblock \href {https://doi.org/10.1017/S0963548300000237}
  {\path{doi:10.1017/S0963548300000237}}.

\bibitem{BabaiHKLS90}
L{\'{a}}szl{\'{o}} Babai, G{\'{a}}bor Hetyei, William~M. Kantor, Alexander
  Lubotzky, and {\'{A}}kos Seress.
\newblock On the diameter of finite groups.
\newblock In {\em Proceedings of the 31st Annual Symposium on Foundations of
  Computer Science, {FOCS} 1990, Volume {II}}, pages 857--865. {IEEE} Computer
  Society, 1990.
\newblock \href {https://doi.org/10.1109/FSCS.1990.89608}
  {\path{doi:10.1109/FSCS.1990.89608}}.

\bibitem{BabaiKL89}
L{\'{a}}szl{\'{o}} Babai, William~M. Kantor, and A.~Lubotzky.
\newblock Small-diameter {Cayley} graphs for finite simple groups.
\newblock {\em European Journal of Combinatorics}, 10(6):507--522, 1989.
\newblock \href {https://doi.org/10.1016/S0195-6698(89)80067-8}
  {\path{doi:10.1016/S0195-6698(89)80067-8}}.

\bibitem{BabaiS88}
L{\'{a}}szl{\'{o}} Babai and {\'{A}}kos Seress.
\newblock On the diameter of {Cayley} graphs of the symmetric group.
\newblock {\em Journal of Combinatorial Theory, Series {A}}, 49(1):175--179,
  1988.
\newblock \href {https://doi.org/10.1016/0097-3165(88)90033-7}
  {\path{doi:10.1016/0097-3165(88)90033-7}}.

\bibitem{BabaiS92}
L{\'{a}}szl{\'{o}} Babai and {\'{A}}kos Seress.
\newblock On the diameter of permutation groups.
\newblock {\em European Journal of Combinatorics}, 13(4):231--243, 1992.
\newblock \href {https://doi.org/10.1016/S0195-6698(05)80029-0}
  {\path{doi:10.1016/S0195-6698(05)80029-0}}.

\bibitem{ES2006}
Friedrich Eisenbrand and Gennady Shmonin.
\newblock Carath\'eodory bounds for integer cones.
\newblock {\em Operations Research Letters}, 34(5):564--568, 2006.
\newblock \href {https://doi.org/10.1016/j.orl.2005.09.008}
  {\path{doi:10.1016/j.orl.2005.09.008}}.

\bibitem{EvenG81}
Shimon Even and Oded Goldreich.
\newblock The minimum-length generator sequence problem is {NP}-hard.
\newblock {\em Journal of Algorithms}, 2(3):311--313, 1981.
\newblock \href {https://doi.org/10.1016/0196-6774(81)90029-8}
  {\path{doi:10.1016/0196-6774(81)90029-8}}.

\bibitem{FHL1980}
Merrick Furst, John~E. Hopcroft, and Eugene~M. Luks.
\newblock Polynomial-time algorithms for permutation groups.
\newblock In {\em Proceedings of the 21st Annual Symposium on Foundations of
  Computer Science, {FOCS} 1980}, pages 36--41. IEEE Computer Society, 1980.
\newblock \href {https://doi.org/10.1109/SFCS.1980.34}
  {\path{doi:10.1109/SFCS.1980.34}}.

\bibitem{HeSe14}
Harald~A. Helfgott and {\'{A}}kos Seress.
\newblock On the diameter of permutation groups.
\newblock {\em Annals of Mathematics}, 179(2):611--658, 2014.
\newblock \href {https://doi.org/10.4007/annals.2014.179.2.4}
  {\path{doi:10.4007/annals.2014.179.2.4}}.

\bibitem{Huffman1998CodesAndGroups}
W.~C. Huffman.
\newblock Codes and groups.
\newblock In V.~S. Pless and W.~C. Huffman, editors, {\em Handbook of Coding
  Theory}, volume~2, chapter~17, pages 1345--1440. Elsevier, Amsterdam, 1998.

\bibitem{Jerrum1985}
Mark Jerrum.
\newblock The complexity of finding minimum-length generator sequences.
\newblock {\em Theoretical Computer Science}, 36:265--289, 1985.
\newblock \href {https://doi.org/10.1016/0304-3975(85)90047-7}
  {\path{doi:10.1016/0304-3975(85)90047-7}}.

\bibitem{KMS84}
D.~Kornhauser, G.~Miller, and P.~Spirakis.
\newblock Coordinating pebble motion on graphs, the diameter of permutation
  groups, and applications.
\newblock In {\em Proceedings of the 25th IEEE Symposium on Foundations of
  Computer Science, {FOCS} 1984}, pages 241--250. IEEE Computer Society Press,
  1984.
\newblock \href {https://doi.org/10.1109/SFCS.1984.715921}
  {\path{doi:10.1109/SFCS.1984.715921}}.

\bibitem{LohreyRosowski2023}
Markus Lohrey and Andreas Rosowski.
\newblock On the complexity of diameter and related problems in permutation
  groups.
\newblock In {\em Proceedings of the 50th International Colloquium on Automata,
  Languages, and Programming, {ICALP} 2023}, volume 261 of {\em Leibniz
  International Proceedings in Informatics (LIPIcs)}, pages 134:1--134:18.
  Schloss Dagstuhl -- Leibniz-Zentrum f{\"u}r Informatik, 2023.
\newblock \href {https://doi.org/10.4230/LIPIcs.ICALP.2023.134}
  {\path{doi:10.4230/LIPIcs.ICALP.2023.134}}.

\bibitem{Magnus1937}
Wilhelm Magnus.
\newblock {\"Uber} {Beziehungen} zwischen h\"oheren {Kommutatoren}.
\newblock {\em Journal für die reine und angewandte Mathematik}, 177:105--115,
  1937.
\newblock \href {https://doi.org/10.1515/crll.1937.177.105}
  {\path{doi:10.1515/crll.1937.177.105}}.

\bibitem{McKe84}
Pierre McKenzie.
\newblock Permutations of bounded degree generate groups of polynomial
  diameter.
\newblock {\em Information Processing Letters}, 19(5):253--254, 1984.
\newblock \href {https://doi.org/10.1016/0020-0190(84)90062-0}
  {\path{doi:10.1016/0020-0190(84)90062-0}}.

\bibitem{Roki13}
Tomas Rokicki, Herbert Kociemba, Morley Davidson, and John Dethridge.
\newblock The diameter of the {Rubik's} cube group is twenty.
\newblock {\em {SIAM} Journal on Discrete Mathematics}, 27(2):1082--1105, 2013.
\newblock \href {https://doi.org/10.1137/120867366}
  {\path{doi:10.1137/120867366}}.

\bibitem{RosserSchoenfeld1962}
J.~Barkley Rosser and Lowell Schoenfeld.
\newblock Approximate formulas for some functions of prime numbers.
\newblock {\em Illinois Journal of Mathematics}, 6(1):64--94, 1962.
\newblock \href {https://doi.org/10.1215/ijm/1255631807}
  {\path{doi:10.1215/ijm/1255631807}}.

\bibitem{Seress2003}
{\'A}kos Seress.
\newblock {\em Permutation Group Algorithms}, volume 152 of {\em Cambridge
  Tracts in Mathematics}.
\newblock Cambridge University Press, Cambridge, 2003.
\newblock \href {https://doi.org/10.1017/CBO9780511546549}
  {\path{doi:10.1017/CBO9780511546549}}.

\bibitem{stockmeyer}
Larry~J. Stockmeyer.
\newblock The polynomial-time hierarchy.
\newblock {\em Theoretical Computer Science}, 3(1):1--22, 1976.
\newblock \href {https://doi.org/10.1016/0304-3975(76)90061-X}
  {\path{doi:10.1016/0304-3975(76)90061-X}}.

\end{thebibliography}

\end{document}